\newtheorem{Satz}{Satz}[section]
\newtheorem{Lem}[Satz]{Lemma}
\newtheorem{Ex}[Satz]{Example}
\newtheorem{Cor}[Satz]{Corollary}
\newtheorem{Pro}[Satz]{Proposition}
\newtheorem{Thm}[Satz]{Theorem}
\newcommand{\W}{d_W}
\newcommand{\N}{\mathbb{N}}
\newcommand{\R}{\mathbb{R}}
\newcommand{\inv}{^{-1}}
\newcommand{\A}{\mathcal{A}}
\newcommand{\Leb}{\mathcal{L}}
\newcommand{\Ha}{\frac{1}{2}}
\newcommand{\Hs}{H^2}
\newcommand{\Ds}{d_{S^2}}
\newcommand{\Dk}{d_{S^1}}
\newcommand{\K}{{S^1}}
\newcommand{\D}{D^2}
\newcommand{\Pih}{\frac{\pi}{2}}
\newcommand{\Ja}{\tn{\tb{J}}}
\newcommand{\tn}[1]{\textnormal{#1}}
\newcommand{\tb}[1]{\textit{#1}}
\newcommand{\tr}{\textrm{d}}
\newcommand{\Rom}[1]{\expandafter\@slowromancap\romannumeral #1@}
\title{Majorization by Hemispheres \& Quadratic Isoperimetric Constants}
\author{Paul Creutz}
\address{Paul Creutz, Mathematisches Institut der Universit\"at zu K\"oln, Weyertal 86-90, 50931 K\"oln, Germany}
\email{pcreutz@math.uni-koeln.de}
\thanks{The author was partially supported by the DFG grant SPP 2026.}
\subjclass
[2010]{54C20, 53A10, 28A75, 53C60}
\begin{document}
\begin{abstract}
Let $X$ be a Banach space or more generally a complete metric space admitting a conical geodesic bicombing. We prove that every closed $L$-Lipschitz curve $\gamma:\K\rightarrow X$ may be extended to an L-Lipschitz map defined on the hemisphere $f:H^2\rightarrow X$. This implies that $X$ satisfies a quadratic isoperimetric inequality (for curves) with constant $\frac{1}{2\pi}$. We discuss how this fact controls the regularity of minimal discs in Finsler manifolds when applied to the work of Alexander Lytchak and Stefan Wenger.
\end{abstract}
\maketitle
\section{Introduction}
\subsection{A Lipschitz Extension Theorem}
The famous majorization theorem of Reshetnyak states that for every rectifiable closed curve $\eta$ in a Hadamard space~$X$ there exists a convex domain $C\subseteq \R^2$ and a $1$-Lipschitz map $f:C \rightarrow X$ such that $f$ restricts to a length preserving parametrization of $\eta$ on $\partial C$, see \cite{Res68}. The main result of this paper is the following spherical analog of Reshetnyak's theorem holding on a large class of spaces. This class includes all Hadamard spaces, Banach spaces and complete Busemann spaces.
\begin{Thm}
\label{theorem1.1}
Let $X$ be a metric space admitting a contracting barycenter map. If $\eta:S^1 \rightarrow X$ is $L$-Lipschitz, then there exists an $L$-Lipschitz extension $f:H^2\rightarrow X$ of $\eta$ where $H^2$ is a metric hemisphere with boundary circle $S^1$.
\end{Thm}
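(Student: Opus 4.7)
The plan is to realize $f$ as a barycentric average of $\eta$. Denote the contracting barycenter of $X$ by $\beta : \mathcal{P}^1(X) \to X$. For each $p \in \Hs$, I will assign a probability measure $\mu_p$ on $\K$ such that $\mu_p = \delta_p$ whenever $p \in \K$, and set
\[
 f(p) \,:=\, \beta\bigl(\eta_* \mu_p\bigr).
\]
The extension property $f\rest{\K} = \eta$ then follows since $\beta(\delta_y) = y$ for every $y \in X$. Because $\eta$ is $L$-Lipschitz, the pushforward $\eta_*$ is $L$-Lipschitz from $(\mathcal{P}^1(\K), W_1)$ to $(\mathcal{P}^1(X), W_1)$; together with the $1$-Lipschitzness of $\beta$, this reduces the $L$-Lipschitz estimate for $f$ to proving that the assignment $p \mapsto \mu_p$ is $1$-Lipschitz from $\Hs$ into $(\mathcal{P}^1(\K), W_1)$.

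To build the family $\{\mu_p\}$, I would parametrize $\Hs$ by polar coordinates $(r,\phi)$, with $r \in [0,\pi/2]$ the spherical distance to the north pole $N$ and $\phi$ the longitude, and use a rotationally covariant ansatz $\mu_{(r,\phi)} = R_\phi^{\ast}\nu_r$ with $\nu_0 = \lambda$ (normalized Haar measure on $\K$) and $\nu_{\pi/2} = \delta_0$. Under this ansatz, the $1$-Lipschitz requirement decomposes into a radial bound $W_1(\nu_{r_1}, \nu_{r_2}) \leq |r_1-r_2|$ and a rotational bound $W_1(R_\epsilon \nu_r, \nu_r) \leq \sin r \cdot |\epsilon|$, matching the two factors of the hemispheric line element $ds^2 = dr^2 + \sin^2 r \, d\phi^2$. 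From these two one-dimensional estimates the global bound $W_1(\mu_{p_1}, \mu_{p_2}) \leq d_{\Hs}(p_1, p_2)$ is obtained via a length-type argument applied to rectifiable paths in $\Hs$, or equivalently by Kantorovich duality applied to $1$-Lipschitz test functions on $\K$.

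The main obstacle is the construction of the profile $\nu_r$ itself: the radial and rotational bounds are in tension, since strong concentration at $\phi = 0$ is needed for the rotational estimate to be tight near the equator, while slow concentration is needed for the radial estimate to be tight near the pole. Naive candidates such as the convex mixture $(1-w(r))\lambda + w(r)\delta_0$, or a uniform measure on a centered arc of length $s(r)$, each satisfy one of the two bounds but violate the other. The compatibility is restored by a subtler profile, and the Jordan-type inequality $\sin r \geq \frac{2r}{\pi}$ on $[0,\pi/2]$ plays a decisive role in the verification; tellingly, this is also the arithmetic fact behind the constant $\frac{1}{2\pi}$ appearing in the quadratic isoperimetric inequality derived from the theorem in the later sections.
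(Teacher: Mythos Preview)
Your high-level reduction is exactly the one the paper uses: set $f=\beta\circ\eta_*\circ\iota$ where $\iota:H^2\to\mathcal P_1(S^1)$ is a $1$-Lipschitz map extending $\delta$, and you are right that this reduces everything to constructing $\iota$. The rotationally covariant ansatz is also correct (the paper's $\iota$ is covariant in this sense).

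The gap is in how you plan to pass from the separate radial and rotational bounds to the global $1$-Lipschitz estimate. Knowing $W_1(\nu_{r_1},\nu_{r_2})\le|r_1-r_2|$ and $W_1(R_\epsilon\nu_r,\nu_r)\le\sin r\cdot|\epsilon|$ only controls the metric derivative of $\iota$ in the two coordinate directions separately; combining them either by a length/triangle-inequality argument or, equivalently, by testing against a fixed $1$-Lipschitz $g$ on $S^1$, yields
\[
|\nabla G|^2=(\partial_r G)^2+\frac{(\partial_\phi G)^2}{\sin^2 r}\le 1+1=2,
\]
so you get $\sqrt2$-Lipschitz, not $1$-Lipschitz. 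For a diagonal direction $\theta=\pi/4$ in the tangent plane this loss is real: the naive estimate gives $|\cos\theta|+|\sin\theta|=\sqrt2$. No choice of profile $\nu_r$ can repair this if you insist on verifying the two coordinate bounds \emph{independently} and then adding. Your last paragraph also concedes that you do not yet have a working profile, so the proposal is presently a framework without the decisive construction.

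The paper avoids this $\ell^1$-versus-$\ell^2$ loss by aiming for more than $1$-Lipschitz: it constructs $\iota$ as an \emph{isometric} embedding. The key extra ingredient (Lemma~1.6) is a complete description of all $\mu\in\mathcal P_1(S^1)$ satisfying $W_1(\mu,\delta_Q)=d_{S^2}(P,Q)$ for every $Q\in S^1$; such $\mu$ exist and have the form $\tfrac12(d_P'')^+\cdot\mathcal H^1_{S^1}+(1-k_P)\nu$ with $\nu$ $\pi$-periodic. Taking $\nu$ uniform gives the profile. Because distances to \emph{boundary} points are preserved exactly, the nondecreasing half of the estimate along any geodesic through a boundary point is automatic; the nonincreasing half is then checked along such geodesics using the explicit optimal-transport formula on $S^1$ (balanced cut points, Proposition~3.3), which lets one identify the Kantorovich potential as $d_{S^1}(C,\cdot)$ and close the equality. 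This geodesic-by-geodesic verification is what replaces your coordinate decomposition and is where the actual work lies.
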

If $X$ is a Hadamard space, then Theorem~\ref{theorem1.1} is a special case of a well known theorem of Urs Lang and Viktor Schroeder, \cite[Theorem A]{LS97}. Other Lipschitz extension theorems for target spaces of nonpositive curvature have been obtained in \cite{LPS00} and \cite{BS01}. Traditionally Lipschitz extensions have been studied in Banach space theory and Theorem~\ref{theorem1.1} is especially interesting in the setting that $X$ is a Banach space. There is a powerful method of proving Lipschitz extension results via barycentric constructions designed by James R. Lee and Assaf Naor in \cite{LN05}. Their method was developed further in \cite{Oht09} and \cite{AP19} and the refined variant will play a key role in the proof of Theorem~\ref{theorem1.1}.\par 
For a metric space $X$ denote \textit{Wasserstein $1$-space} over $X$ by $\mathcal{P}_1(X)$, see section~\ref{section2.1}. A map $b$ assigning to $\mu \in \mathcal{P}_1(X)$ a point $b(\mu)\in X$ is called \textit{barycenter map} if every Dirac measure $\delta_x$ one has  $b(\delta_x)=x$. The map $b:\mathcal{P}_1(X)\rightarrow X$ is called \textit{contracting} if it is $1$-Lipschitz with respect to \textit{Wasserstein $1$-distance} $d_W$.\par 
If $X$ is a Banach space one may define a contracting barycenter map simply via $b(\mu):=\int_X x \ \tr \mu(x)$ and if $X$ is a Hadamard space by minimizing the functional $q\mapsto \int_X d^2(p,q)\tr\mu(p)$. It turns out that contracting barycenter maps have a more geometric equivalent which are conical bicombings introduced by Dominic Descombes and Urs Lang in \cite{DL15}. A \textit{conical (geodesic) bicombing} $\sigma$ on $X$ is a map assigning to every tupel of points $(x,y)$ in $X$ a geodesic $\sigma_{x,y}$ such that for any pair of tupels $(x,y),(x',y')$ the distance function between $\sigma_{x,y}$ and $\sigma_{x',y'}$ satisfies a weak convexity condition, see section~\ref{section2.2}. A complete metric space admits a contracting barycenter map iff it admits a conical bicombing, see \cite[Theorem~3.4]{Bas18}. Conical bicombings are much easier to construct explicitly than contracting barycenter maps. Spaces admitting conical bicombings include all normed spaces, $\tn{CAT}(0)$ spaces, Busemann spaces, Wasserstein $1$-spaces and injective spaces in the sense of \cite{Lan13}.
\subsection{Applications}
An \textit{area functional} $\mathcal{A}$  is a functional assigning to each Lipschitz map $f:E\rightarrow X$ where $E \subseteq \R^2$ is a Borel set and $X$ is a metric space a number $\mathcal{A}(f)\in [0,\infty]$ such that certain natural axioms are fulfilled, see section \ref{section4.1}. Most intuitive example to have in mind is the \textit{Busemann area functional} $\mathcal{A}^b$ given by the parametrized $2$-dimensional Hausdorff measure of $\tn{im}(f)$.\par 
Fix an area functional $\mathcal{A}$ and a metric space $X$. Let $\eta:\K \rightarrow X$ be a Lipschitz curve. We call a Lipschitz map on the closed unit disc $f:D^2 \rightarrow X$ a \textit{filling} of $\eta$ if it restricts to $\eta$ on $\partial D^2=\K$. Define the \textit{filling area of $\eta$} with respect to $\mathcal{A}$, denoted $\tn{Fill}_\mathcal{A}(\eta)$, to be the infimum of $\mathcal{A}(f)$ where $f$ ranges over all fillings of $\eta$. We say that $X$ satisfies a \textit{$C$-quadratic isoperimetric inequality} with respect to $\mathcal{A}$ if for all $L\geq 0$ and all Lipschitz curves $\eta:\K \rightarrow X$ of length $L$ one has
 \begin{equation}
 \tn{Fill}_\mathcal{A}(\eta)\leq C\cdot L^2.
 \end{equation} Quadratic isoperimetric inequalities in this sense have been investigated by Alexander Lytchak, Stefan Wenger and Robert Young in~\cite{LWYar} and \cite{LW18}.\par
A consequence of Reshetnyak's majorization theorem is that Hadamard spaces satisfy a Euclidean aka $\frac{1}{4\pi}$-quadratic isoperimetric inequality with respect to every area functional. Similarly from  Theorem~\ref{theorem1.1} we derive the following.
\begin{Thm}
\label{theorem1.2}
Let $\mathcal{A}$ be an area functional and $X$ a metric space.
If $X$ admits a contracting barycenter map, then $X$ satisfies a $\frac{1}{2\pi}$-quadratic isoperimetric inequality with respect to $\mathcal{A}$.
\end{Thm}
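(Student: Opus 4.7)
The plan is to combine Theorem~\ref{theorem1.1} with a bi-Lipschitz parametrization of the hemisphere by the disc.

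Given a Lipschitz curve $\eta : \K \to X$ of length $L$, I first reparametrize by constant speed. This leaves $\tn{Fill}_\mathcal{A}(\eta)$ unchanged (by the standard parametrization-invariance of area functionals), so I may assume $\eta$ is $\frac{L}{2\pi}$-Lipschitz. Theorem~\ref{theorem1.1} then produces an $\frac{L}{2\pi}$-Lipschitz extension $F : H^2 \to X$.

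Next, let $\phi : D^2 \to H^2$ be the bi-Lipschitz map sending the point with polar coordinates $(\rho, \theta)$ to the point of $H^2$ at colatitude $\rho\tfrac{\pi}{2}$ and longitude $\theta$; on $\partial D^2$ this agrees with the canonical parametrization of the equator of $H^2$ by $\theta$. Hence $f := F \circ \phi$ is a Lipschitz filling of $\eta$, and the proof reduces to bounding $\mathcal{A}(f)$.

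For the bound I would apply the change of variables formula for $\mathcal{A}$: at almost every $x \in D^2$ the approximate metric differential factors as $(\tn{ap md}\, f)_x = (\tn{ap md}\, F)_{\phi(x)} \circ d\phi_x$, and the Jacobian $J^\mathcal{A}$ picks up a $|\det d\phi_x|$ factor under linear precomposition. Integrating and changing variables to $H^2$,
\begin{equation}
\mathcal{A}(f) = \int_{H^2} J^\mathcal{A}\bigl((\tn{ap md}\, F)_y\bigr) \, dy \leq \left(\tfrac{L}{2\pi}\right)^2 \cdot 2\pi = \tfrac{L^2}{2\pi},
\end{equation}
using the pointwise bound $J^\mathcal{A}((\tn{ap md}\, F)_y) \leq (\tfrac{L}{2\pi})^2$---obtained from the normalization and quadratic scaling of $J^\mathcal{A}$ on the Euclidean seminorm---together with $\tn{Area}(H^2) = 2\pi$.

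The main obstacle is the change of variables step: verifying $J^\mathcal{A}(s \circ A) = J^\mathcal{A}(s) \cdot |\det A|$ for a seminorm $s$ and linear $A$, and applying it under the integral for a general area functional $\mathcal{A}$. This should follow from the axioms in Section~\ref{section4.1} (agreement of $J^\mathcal{A}$ with the Euclidean Jacobian on Euclidean seminorms, monotonicity, homogeneity), but setting it up cleanly---in particular accommodating rank-degeneracy of $\tn{ap md}\, F$---is the part that requires care.
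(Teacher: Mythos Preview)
Your argument is correct and follows the same route as the paper: reparametrize $\eta$ by arclength, invoke Theorem~\ref{theorem1.1} to get a $v$-Lipschitz map $F:H^2\to X$ with $v=L/(2\pi)$, precompose with a diffeomorphism $\phi:D^2\to H^2$, and bound $\mathcal{A}(F\circ\phi)$.

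The only difference is in how the final estimate is carried out, and this bears on the ``main obstacle'' you flag. You pass through the Jacobian representation, a chain rule for metric differentials, and a change of variables to $H^2$. The paper sidesteps all of that by applying the volume-functional axioms from Section~\ref{section4.1} directly: monotonicity (axiom~2) with $g=F$ gives $\mathcal{A}(F\circ\phi)\le v^2\,\mathcal{A}(\phi)$ in one stroke, and normalization (axiom~1) gives $\mathcal{A}(\phi)=\mathcal{A}^b(\phi)=\mathcal{H}^2(H^2)=2\pi$ since $\phi$ lands in a Riemannian manifold. So the inequality $\mathcal{A}(F\circ\phi)\le 2\pi v^2 = L^2/(2\pi)$ falls out without ever touching Jacobians, the transformation law, or rank-degeneracy of $\tn{md}\,F$. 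Your concerns in the last paragraph are thus legitimate but avoidable.
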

In general the constant $\frac{1}{2\pi}$ in Theorem~\ref{theorem1.2} is optimal even in case $X$ is a Banach space and $\mathcal{A}=\mathcal{A}^b$ or $\mathcal{A}$ is the Holmes-Thompson area functional~$\mathcal{A}^{ht}$. This is due to a theorem of Sergei Ivanov which implies a lower bound on the filling area of isometrically embedded circles, see \cite[Theorem~2]{Iva11}.\par 
By local comparison, Reshetnyak's majorization theorem and Theorem~\ref{theorem1.2} also imply the following.
\begin{Cor}
\label{corollary1.3}
Let $M$ a smooth manifold, $\mathcal{A}$ an area functional and $\delta>0$.
\begin{enumerate}[label={\arabic*.}]
\item If $F:TM\rightarrow \R$ is a continuous Finsler structure on $M$, then $(M,d_F)$ satisfies a $\left(\frac{1}{2\pi}+\delta\right)$-local quadratic isoperimetric inequality with respect to~$\A$. 
\item If $g:TM\rightarrow \R$ is a continuous Riemannian structure on $M$, then $(M,d_g)$ satisfies a $\left(\frac{1}{4\pi}+\delta\right)$-local quadratic isoperimetric inequality with respect to~$\A$.
\end{enumerate}
\end{Cor}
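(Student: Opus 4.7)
The statement is local, so I fix a point $p\in M$ and aim to establish the inequality on a neighborhood of $p$. Choose a chart identifying an open neighborhood of $p$ with an open set in $\R^n$, sending $p$ to $0$. Continuity of the Finsler structure $F$ means that, given $\epsilon>0$, the chart may be shrunk to a coordinate ball $U$ on which $F_q$ is $(1+\epsilon)$-bi-Lipschitz equivalent to the Minkowski norm $\|\cdot\|_0:=F_p$ for every $q\in U$. Integrating along straight lines in coordinates then shows that the identity from $(U,d_F)$ to $(U,d_{\|\cdot\|_0})$ is $(1+\epsilon)$-bi-Lipschitz on a possibly smaller ball.

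Now $(\R^n,\|\cdot\|_0)$ is a Banach space and therefore admits a contracting barycenter map, so by Theorem~\ref{theorem1.2} it satisfies the $\tfrac{1}{2\pi}$-quadratic isoperimetric inequality with respect to $\A$. Given a Lipschitz loop $\eta:\K\to U$ of $d_F$-length $L$, its length $L_0$ in the norm metric satisfies $L_0\leq(1+\epsilon)L$, so Theorem~\ref{theorem1.2} produces a filling $\tilde f:\D\to(\R^n,\|\cdot\|_0)$ with $\A(\tilde f)\leq\tfrac{1}{2\pi}L_0^2$. Inspecting the construction, $\tilde f$ arises from composing the hemisphere extension of Theorem~\ref{theorem1.1} with a fixed map $\D\to H^2$; since $H^2$ has bounded diameter, $\tn{im}(\tilde f)$ lies in a $CL_0$-neighborhood of $\tn{im}(\eta)$ for an absolute constant $C$. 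Restricting to loops with $L\leq\ell_0$ for a sufficiently small threshold $\ell_0=\ell_0(U,\epsilon)$ therefore ensures $\tn{im}(\tilde f)\subseteq U$. Viewing $\tilde f$ as a filling of $\eta$ in $(U,d_F)$ via the bi-Lipschitz identification, and using that area functionals obey $\A(\phi\circ f)\leq\tn{Lip}(\phi)^2\A(f)$, I obtain $\tn{Fill}_\A(\eta)\leq\tfrac{(1+\epsilon)^4}{2\pi}L^2$. Choosing $\epsilon$ small in terms of $\delta$ yields the desired bound.

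The Riemannian case is the same argument with $\|\cdot\|_0:=\sqrt{g_p(\cdot,\cdot)}$ a Euclidean norm; then $(\R^n,\|\cdot\|_0)$ is a Hadamard space, so Reshetnyak's majorization theorem (in place of Theorem~\ref{theorem1.2}) yields the sharper $\tfrac{1}{4\pi}$-quadratic isoperimetric inequality on the model, hence the $(\tfrac{1}{4\pi}+\delta)$-bound on $(M,d_g)$. I expect the main technical point to be verifying the diameter estimate for the filling produced by Theorem~\ref{theorem1.1}, which is what forces the conclusion to be local and to involve a threshold on the length of the curve.
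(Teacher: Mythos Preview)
Your overall strategy---compare $(M,d_F)$ locally to a normed space via a $(1+\epsilon)$-bi-Lipschitz chart, apply Theorem~\ref{theorem1.2} (resp.\ Reshetnyak's theorem) in the model, and push the filling back---is exactly the paper's approach, and your final chain of inequalities is the paper's computation $\tn{Fill}_\A(\eta)\leq(1+\epsilon)^2\tn{Fill}_\A(\phi^{-1}\circ\eta)\leq\frac{(1+\epsilon)^2}{2\pi}L_{\phi^{-1}\circ\eta}^2\leq\frac{(1+\epsilon)^4}{2\pi}L_\eta^2$. However, you take an unnecessary detour that leaves a gap. The paper records the local comparison as Lemma~\ref{lemma4.4}: it produces a $(1+\epsilon)$-bi-Lipschitz diffeomorphism $\phi:(V,s)\to(U,d_F)$ with $V\subseteq\R^n$ \emph{convex}. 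A convex subset of a finite-dimensional normed space itself admits a contracting barycenter map (see the Example in Section~\ref{section2.1}), so Theorem~\ref{theorem1.2} applies directly to $(V,s)$ as a metric space; the filling of $\phi^{-1}\circ\eta$ then automatically lies in $V$ and hence, via $\phi$, in $U$. No diameter estimate and no restriction on the length of $\eta$ is needed, and the proof is one line.

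By contrast, you fill $\phi^{-1}\circ\eta$ in all of $(\R^n,\|\cdot\|_0)$ and then try to argue the hemisphere filling stays in $U$, which forces you to impose a length threshold $L\leq\ell_0$. But the paper's definition of a local quadratic isoperimetric inequality (Section~\ref{section4.3}) asks that some neighborhood $U$ satisfy the inequality for \emph{every} Lipschitz curve $\eta:\K\to U$; there is no length cap. Curves contained in a small $U$ can still have arbitrarily large length, and for such curves your hemisphere filling has image of diameter comparable to $L$, so it need not remain in $U$. Even for short curves one must worry about $\eta$ hugging $\partial U$. Thus your argument, as written, does not establish Corollary~\ref{corollary1.3} with the paper's definition; the one-line fix is precisely to exploit the convexity of the local model as above (your coordinate ball already is convex, you simply did not use it).
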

In \cite{LW17a} Alexander Lytchak and Stefan Wenger solved the disc type Plateau problem in proper metric spaces $X$ for 'nice' area functionals $\mathcal{A}$. The arising solutions are Sobolev maps $u: \D \rightarrow X$ and will be called \textit{$\mathcal{A}$-minimal discs}. See section~\ref{section5} and the references therein for more precise definitions. Applying Theorem~\ref{theorem1.2} and Corollary~\ref{corollary1.3} to the regularity theorems of \cite{LW17a} and \cite{LW17b} one obtains results like the following.
 \begin{Thm}
 \label{theorem1.4}
Let $M$ be a smooth manifold and $F$ a continuous Finsler structure on $M$ such that $X:=(M,d_F)$ is complete. If $u$ is a bounded $\mathcal{A}^b$-minimal disc in $X$, then $u$ admits a representative that is locally $\alpha$-Hölder continuous in the interior of~$D^2$ for all $\alpha<\frac{\pi}{8}$.
\end{Thm}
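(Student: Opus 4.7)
The plan is to combine Corollary~\ref{corollary1.3}(1) with the H\"older regularity theorem for area-minimizing Sobolev discs from \cite{LW17a} and \cite{LW17b}. No genuinely new estimate is required beyond the optimal isoperimetric constant $\frac{1}{2\pi}$ furnished by Theorem~\ref{theorem1.2}. Fix $\delta > 0$. By Corollary~\ref{corollary1.3}(1) the space $X = (M,d_F)$ satisfies a local $\left(\frac{1}{2\pi}+\delta\right)$-quadratic isoperimetric inequality with respect to $\mathcal{A}^b$. Since $u$ is bounded and $X$ is complete and locally compact (as a continuous Finsler manifold), the essential image of $u$ lies in a compact subset of $X$, so a uniform local isoperimetric constant of this form is available along $u$.

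Next I invoke the H\"older regularity theorem of Lytchak-Wenger for $\mathcal{A}^b$-minimal discs: if the target satisfies a local $C$-quadratic isoperimetric inequality with respect to $\mathcal{A}^b$, then a bounded $\mathcal{A}^b$-minimal disc is locally $\alpha$-H\"older continuous in the interior of $D^2$ for every
\begin{equation}
\alpha < \frac{1}{4\pi \cdot C \cdot Q(\mathcal{A}^b)},
\end{equation}
where $Q(\mathcal{A}^b) \leq \frac{4}{\pi}$ is the pointwise comparison constant between the Busemann area density and the inscribed Riemannian area density in normed planes. Plugging $C = \frac{1}{2\pi}+\delta$ and the density bound into this formula yields
\begin{equation}
\alpha < \frac{1}{4\pi \cdot \left(\frac{1}{2\pi}+\delta\right) \cdot \frac{4}{\pi}} = \frac{\pi}{8\,(1+2\pi\delta)}.
\end{equation}
Letting $\delta \to 0$ delivers the claim $\alpha < \frac{\pi}{8}$. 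As a sanity check, the same scheme applied to Corollary~\ref{corollary1.3}(2), where the Busemann area coincides with the inscribed Riemannian area and so $Q=1$, recovers the classical Riemannian exponent $\alpha<1$.

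The main obstacle, and indeed the only real content of the argument, is the bookkeeping of constants: one must identify the precise form of the H\"older exponent provided by \cite{LW17a,LW17b} as $\frac{1}{4\pi C Q(\mathcal{A}^b)}$ and pair it with the sharp density bound $Q(\mathcal{A}^b) \leq \frac{4}{\pi}$ in normed planes, so that the two occurrences of $\pi$ arrange themselves to give exactly $\frac{\pi}{8}$ rather than some other rational multiple of $\pi^{\pm 1}$. Locality of the isoperimetric inequality poses no issue, since boundedness of $u$ together with properness of $X$ reduces matters to a compact subset on which the regularity machinery applies verbatim.
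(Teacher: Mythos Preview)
Your proposal is correct and follows essentially the same route as the paper: apply Corollary~\ref{corollary1.3}(1) to obtain a local $\left(\frac{1}{2\pi}+\delta\right)$-quadratic isoperimetric inequality, then feed this into the Lytchak--Wenger regularity theorem (stated in the paper as Theorem~\ref{theorem5.1}) with H\"older exponent $\alpha=\frac{q_2^{\mathcal{A}}}{4\pi C}$ and the value $q_2^b=\frac{\pi}{4}$ from Theorem~\ref{theorem4.2}. Your constant $Q(\mathcal{A}^b)=\frac{4}{\pi}$ is simply the reciprocal of the paper's $q_2^b$, and your observation that boundedness of $u$ together with properness of $X$ yields precompact image is exactly the reduction the paper uses.
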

Theorem~\ref{theorem1.4} and similar results will be discussed in section~\ref{section5}. Besides applying to a larger class of spaces Theorem~\ref{theorem1.4} improves the results~\cite[Theorem~1.2]{OvdM14} and \cite[Theorem~1.1]{PvdM17} of Patrick Overath, Sven Pistre and Heiko von der Mosel in the sense that it gives a large and universal Hölder constant. As in their work one may generalize Theorem~\ref{theorem1.4} to certain nonreversible Finsler structures. These are nonreversible Finsler structures satisfying property $(GA2)$ discussed in \cite{PvdM17}.
\subsection{The proof}
If $X$ admits a conical bicombing, there is an obvious cone construction of a filling of a given Lipschitz curve. This works to prove Theorem~\ref{theorem1.2} for the Holmes-Thompson area functional $\mathcal{A}^{ht}$ as I showed in my master thesis. However it fails for $\mathcal{A}^b$. The fillings we get for the proof of Theorem~\ref{theorem1.2} from Theorem~\ref{theorem1.1} are way more complicated and do not directly make use of the conical bicombing on $X$.\par
Lemma~$3.5$ in \cite{Oht09} reduces the proof of Theorem~\ref{theorem1.1} to the following very special case.
\begin{Pro}
\label{proposition1.5}
There exists an isometric embedding $\iota:H^2\rightarrow \mathcal{P}_1\left(S^1\right)$ such that~$\iota(P)=\delta_P$ for all $P\in S^1$.
\end{Pro}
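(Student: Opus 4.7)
The plan is to construct $\iota$ explicitly and verify the isometric property via the Kantorovich-Rubinstein duality for $\W$. I would first exploit the $O(2)$-symmetry of the problem: both $\Hs$ and $\K$ carry natural $O(2)$-actions (rotations about the north pole $N$ of $\Hs$ together with a reflection), and any boundary-preserving isometric embedding of $\Hs$ into $\mathcal{P}_1(\K)$ is forced to be equivariant. Writing $P \in \Hs$ in polar coordinates $(r,\phi) \in [0,\pi/2] \times [0,2\pi)$ with $r$ the spherical distance from $N$ and $\phi$ the longitude, the problem reduces to producing a one-parameter family $\{\mu_r\}_{r \in [0,\pi/2]}$ of probability measures on $\K$, each symmetric about a fixed base point, with $\mu_0$ the uniform measure $\lambda$ on $\K$ and $\mu_{\pi/2} = \delta_0$; one then sets $\iota(P) := (R_\phi)_* \mu_r$, where $R_\phi$ denotes rotation of $\K$ by $\phi$.

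The heart of the argument is picking the right family $\mu_r$. Elementary candidates fail. For the convex combination $\mu_r = (1-s(r)) \lambda + s(r) \delta_0$, the ``meridian test'' $\W(\mu_r, \mu_{r'}) = |r-r'|$ forces $s(r) = 2r/\pi$, while the ``rotation test'' requires $\W(\mu_r, (R_\phi)_* \mu_r) = \arccos(\sin^2 r \cos\phi + \cos^2 r)$, whose linearization at $\phi = 0$ is $\sin r \cdot |\phi|$, forcing $s(r) = \sin r$; these two requirements are incompatible, and the same incompatibility rules out uniform measures supported on a shrinking arc. A more structured candidate is a Poisson-kernel-type density such as
\[
\rho_r(\psi) = \frac{\cos^3 r}{2\pi(1 - \sin r \cos\psi)^2},
\]
which has the correct uniform and Dirac boundary limits and concentrates smoothly at the base point as $r \to \pi/2$; a kernel with these qualitative features is what is needed to balance the radial and tangential sensitivities of the embedding.

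With $\iota$ in hand, the identity $\W(\iota(P), \iota(Q)) = \Ds(P, Q)$ is verified via Kantorovich-Rubinstein. Introducing the linear operator $E: \mathrm{Lip}_1(\K) \to \R^{\Hs}$ defined by $E(f)(P) := \int_\K f \, d\mu_P$, one needs (a) that $E$ sends $1$-Lipschitz functions on $\K$ to $1$-Lipschitz functions on $\Hs$, yielding $\W \leq \Ds$, and (b) that for every pair $P, Q \in \Hs$ there is a $1$-Lipschitz $f: \K \to \R$ with $E(f)(P) - E(f)(Q) = \Ds(P,Q)$, yielding $\W \geq \Ds$. When $Q \in \K$ one establishes (b) by testing against $f(R) := -\Dk(Q, R)$, which is $1$-Lipschitz on $\K$ because $\K$ is a geodesic circle of $\Sp$; this reduces (b) to the first-moment identity $\int_\K \Dk(Q, R) \, d\mu_P(R) = \Ds(P, Q)$, which can be verified directly from the formula. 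The main obstacle is the upper bound (a), a global Lipschitz estimate for the integral operator $E$: the kernel $\mu_r$ must be finely tuned so that the positive curvature of $\Hs$ is matched exactly by the Wasserstein response of $\mu_r$ to perturbations of $P$. This is where the spherical geometry of $\Hs$ and the Wasserstein structure of $\mathcal{P}_1(\K)$ interact most delicately.
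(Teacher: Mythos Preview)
Your plan has the right overall shape---reduce by rotational symmetry to a one-parameter family of measures, then verify the isometry via Kantorovich--Rubinstein---but the concrete content is missing or wrong at the two decisive places.

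First, the Poisson-type kernel you propose does \emph{not} satisfy the first-moment identity $\int_{\K} \Dk(Q,R)\,d\mu_P(R)=\Ds(P,Q)$, so your claim ``which can be verified directly from the formula'' is false. The paper's Lemma~\ref{lemma3.2} (the refinement of Lemma~\ref{lemma1.6}) characterises \emph{exactly} which $\mu\in\mathcal{P}_1(\K)$ satisfy that identity for a given $P$: they are the measures of the form $\tfrac12(d_P'')^+\cdot\mathcal{H}^1_{\K}+(1-k_P)\nu$ with $\nu$ invariant under the antipodal map of $\K$. Checking this for your density amounts to asking whether $\rho_r(\psi)-\rho_r(\psi+\pi)=\tfrac12 d_P''(\psi)$ on $|\psi|\le\tfrac\pi2$; a direct computation gives
\[
\rho_r(\psi)-\rho_r(\psi+\pi)=\frac{2\sin r\,\cos^3 r\,\cos\psi}{\pi\,(1-\sin^2 r\cos^2\psi)^2},
\qquad
\tfrac12 d_P''(\psi)=\frac{\sin r\,\cos^2 r\,\cos\psi}{2\,(1-\sin^2 r\cos^2\psi)^{3/2}},
\]
and these agree only when $\sin r=0$. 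So already the ``easy'' direction (b) for boundary points $Q$ fails for your candidate. (Incidentally, the assertion that any boundary-extending isometric embedding must be $O(2)$-equivariant is also unjustified; the paper remarks that the embedding is highly non-unique, the freedom being precisely the choice of $\pi$-periodic $\nu$.)

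Second, even with the correct density---the paper takes $\nu=\tn{Uni}(\K)$, i.e.\ $h_P=\tfrac12(d_P'')^++\tfrac{1-k_P}{2\pi}$---the hard step is, as you say, the $1$-Lipschitz bound. Your plan to prove this by showing that the averaging operator $E$ sends $1$-Lipschitz functions on $\K$ to $1$-Lipschitz functions on $\Hs$ is not how the paper proceeds, and you give no mechanism for carrying it out. The paper instead works along geodesic segments in $\Hs$ issuing from a boundary point $C$, and for two points $A_1,A_2$ on such a segment shows that $C$ is a \emph{balanced cut point} for $(\mu_{A_1},\mu_{A_2})$ in the sense of Cabrelli--Molter. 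By their formula for $\W$ on the circle (Proposition~\ref{proposition3.3}), the optimal Kantorovich potential is then $\Dk(C,\cdot)$, which forces $\W(\mu_{A_1},\mu_{A_2})=\Ds(A_1,C)-\Ds(A_2,C)=\Ds(A_1,A_2)$. Verifying the balanced-cut-point condition reduces to a monotonicity statement for an explicit function of spherical angles (Lemma~\ref{lemma3.4}), proved by direct differentiation using Napier's rules. That monotonicity lemma is the genuine technical core, and nothing in your outline substitutes for it.
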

The first step in the proof of Proposition~\ref{proposition1.5} is to discuss possible isometric extensions of $\delta:\K \rightarrow \mathcal{P}_1(\K)$ to one single point $P\in H^2\fgebackslash \K$. We obtain the following surprising lemma.
\begin{Lem}
\label{lemma1.6}
Let $P\in H^2\fgebackslash \K$. There is a natural bijection $\Phi_P$ between the set of $\pi$-periodic $\nu \in \mathcal{P}_1(\K)$ and the set of $\mu \in \mathcal{P}_1(\K)$ such that $d_W(\mu,\delta_Q)=\Ds(P,Q)$ for all $Q \in \Hs$. It is given by $\Phi_P(\nu):=h_P \cdot \mathcal{H}^1_\K+(1-k_P) \cdot \nu$ where $h_P:\K \rightarrow \R$ is a continuous density and $k_P \in [0,1)$ is a constant.
\end{Lem}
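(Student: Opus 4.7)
My plan is to identify $\K$ with $\R/2\pi\Z$ and first reduce the Wasserstein condition (for $Q\in\K$) to a convolution equation. For a Dirac target, $\W(\mu,\delta_Q)=\int_{\K}\Dk(x,Q)\,d\mu(x)=(K\ast\mu)(Q)$ with kernel $K(\theta):=\Dk(\theta,0)$, so writing $u_P(Q):=\Ds(P,Q)$ the condition becomes $K\ast\mu=u_P$.

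The key input is the antipodal identity $u_P(Q+\pi)=\pi-u_P(Q)$, which, together with the distributional relation $K''=2(\delta_0-\delta_\pi)$ on $\K$, motivates decomposing measures under the involution $\tau_\pi$ of rotation by $\pi$. Every signed measure $\sigma$ on $\K$ splits uniquely as $\sigma=\sigma^{\mathrm s}+\sigma^{\mathrm a}$ into a $\pi$-periodic part and a $\pi$-antiperiodic part. Differentiating $K\ast\mu=u_P$ twice distributionally yields $4\mu^{\mathrm a}=u_P''$. Since $P\notin\K$ the function $u_P$ is smooth and $u_P''$ continuous, so $\mu^{\mathrm a}$ is absolutely continuous with continuous density $u_P''/4$, and the singular part of $\mu$ is forced to be $\pi$-periodic.

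Let $g$ denote the density of the absolutely continuous part of $\mu^{\mathrm s}$. Since $g$ is $\pi$-periodic and $\mu\geq 0$ both at $\theta$ and $\theta+\pi$, we get $g(\theta)\geq\max(u_P''(\theta)/4,-u_P''(\theta)/4)=|u_P''(\theta)|/4$ almost everywhere. Using the identity $|x|+x=2x_+$, I define
\[
h_P(\theta):=\left(\tfrac{1}{2} u_P''(\theta)\right)_+,\qquad k_P:=\int_{\K}h_P\,d\mathcal{H}^1,
\]
both depending solely on $P$. Then $h_P$ is continuous and nonnegative, and $\mu-h_P\,\mathcal{H}^1$ is a nonnegative $\pi$-periodic measure of total mass $1-k_P$; hence $\nu:=(\mu-h_P\,\mathcal{H}^1)/(1-k_P)$ is a $\pi$-periodic probability measure, defining $\Phi_P\inv$. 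Conversely, the anti-symmetric part of $\Phi_P(\nu):=h_P\,\mathcal{H}^1+(1-k_P)\nu$ equals $(u_P''/4)\,\mathcal{H}^1$ (by the identity $x_+-(-x)_+=x$), and a distributional check shows $K\ast\Phi_P(\nu)=u_P$ since both sides share the second derivative $u_P''$ and the $\mathcal{H}^1$-average $\pi/2$.

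Finally, in spherical coordinates with $P=(\sin\alpha_P\cos\phi_P,\sin\alpha_P\sin\phi_P,\cos\alpha_P)$ one has $u_P(\theta)=\arccos(\sin\alpha_P\cos(\theta-\phi_P))$; integrating $(u_P''/2)_+$ over the semicircle where $u_P''>0$ yields the clean value $k_P=\sin\alpha_P\in[0,1)$ since $P\notin\K$. The main obstacle is the measure-theoretic nonnegativity step; it hinges on the observation that the distributional equation $4\mu^{\mathrm a}=u_P''$ forces the singular part of $\mu$ to be $\pi$-periodic, which is precisely what enables the pointwise density comparison $g\geq|u_P''|/4$ used to isolate $h_P$.
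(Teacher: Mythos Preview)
Your argument is correct and reaches the same objects as the paper ($h_P=\tfrac12(d_P'')^+$, $k_P=\cos d_{S^2}(P,\K)=\sin\alpha_P$), but the route is genuinely different. The paper (Lemma~3.2) proceeds via an intermediate ``cumulative'' characterization: taking the one-sided $Q$-derivative of $\W(\mu,\delta_Q)$ it shows the Wasserstein condition is equivalent to $\mu([Q,Q+\pi))=\tfrac12-\tfrac12 d_P'(Q)$, and then checks by hand on half-open arcs (Dynkin $\pi$--$\lambda$) that $\bar\nu:=\mu-\tfrac12(d_P'')^+\mathcal{H}^1$ is $T$-invariant and nonnegative, finally matching the value at the basepoint~$B$. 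Your approach is more structural: you recognise $\W(\mu,\delta_\cdot)=K\ast\mu$ with $K''=2(\delta_0-\delta_\pi)$, so two distributional derivatives isolate the $T$-antiperiodic part $4\mu^{\mathrm a}=u_P''$ in one stroke; the positivity $\mu,T_*\mu\ge 0$ then gives $\mu^{\mathrm s}\ge|\mu^{\mathrm a}|$, from which $\mu-h_P\mathcal{H}^1\ge 0$ and its $T$-invariance follow via $x^+-(-x)^+=x$. The converse is handled by the clean observation that on $\K$ a function is determined by its second derivative and its mean, both of which you match. Your method is shorter and more conceptual; the paper's has the advantage that the intermediate half-interval formula $\mu([Q,Q+\pi))=\tfrac12-\tfrac12 d_P'(Q)$ is reused verbatim later (Section~3.2) in the balanced-cutpoint analysis, so it is not wasted scaffolding.
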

To prove Proposition~\ref{proposition1.5} we define $\iota:H^2\rightarrow W^1(\K)$ via $\iota(P):=\Phi_P(\tn{Uni}(\K))$ where $\tn{Uni}(\K)$ denotes the uniform distribution on $\K$. To prove that $\iota$ defines indeed an isometric embedding is a bit technical. A formula for Wasserstein-$1$-distance on $\K$ developed in \cite{CM95} reduces it to some analytic estimates of distances and angles on $\mathbb{S}^2$. The reason that the proof of Proposition~\ref{proposition1.5} gets a bit involved at this point is probably that the embedding $\iota$ is highly nonunique and the construction hence not very canonical.
\subsection{Outline of the paper}
In \ref{section2.1} we  fix notations for Wasserstein-$1$-space and contracting barycenter maps. Furthermore we discuss basic properties as well as the reduction of Theorem~\ref{theorem1.1} to Proposition~\ref{proposition1.5}. In \ref{section2.2} we discuss the connection of contracting barycenter maps and conical bicombing and give examples. Section~\ref{section3} is dedicated to the proof of Proposition~\ref{proposition1.5}. In \ref{section3.1} first we prove Lemma~\ref{lemma1.6}. Then in \ref{section3.2} and \ref{section3.3} we perform the more technical part of the proof of Proposition~\ref{proposition1.5}. The topic of section~\ref{section4} are quadratic isoperimetric constants. In \ref{section4.1} we discuss area functionals and Jacobians. In \ref{section4.2} we proof Theorem~\ref{theorem1.2} and discuss its optimality. In \ref{section4.3} we have a look at continuous Finsler structures and local quadratic isoperimetric inequalities. Finally in section~\ref{section5} we study $\mathcal{A}$-minimal discs and give the proof of Theorem~\ref{theorem1.4} as well as similar results.

\section{Wasserstein space, barycenter maps \& conical bicombings}
\subsection{Wasserstein $1$-space and Barycenter maps}
\label{section2.1}
Let $(X,d)$ be a metric space and $\mathcal{P}(X)$ the set of separably supported probability measures on the Borel $\sigma$-algebra of $X$. Note that if $X$ is complete, then $\mathcal{P}(X)$ is nothing but the set of Radon probability measures on $X$ by Ulam's theorem, see \cite[Theorem 7.1.4]{Dud89}. For $\mu,\nu\in \mathcal{P}(X)$ a  measure $K\in \mathcal{P}(X\times X)$ is called \textit{coupling} of $\mu$ and $\nu$ if $\pi_{1*}(K)=\mu$ and $\pi_{2*}(K)=\nu$. Here $\pi_i:X\times X \rightarrow X$ are the respective coordinate projections and $(-)_*$ indicates push forward of measures. Denote the set of couplings of $\mu$ and $\nu$ by $\Pi(\mu,\nu)$. The \textit{Wasserstein $1$-distance} $\W$ on $\mathcal{P}(X)$ is defined via
\[
\W(\mu,\nu):=\inf_{K\in \Pi(\mu,\nu)} \int_{X\times X} d(x,y) \ \tr K(x,y).
\]
Besides the fact that it might take infinite values $\W$ defines a metric on $\mathcal{P}(X)$, see \cite{Kel85}. If $x \in X$ and $\mu \in \mathcal{P}(X)$, then $\Pi(\mu,\delta_x)=\{\mu \otimes \delta_x\}$ and hence
\begin{equation}
\label{equation(1)}
\W(\delta_x,\mu)=\int_X d(x,y)\ \tr \mu(y).
\end{equation} 
By \eqref{equation(1)} the Dirac map $\delta:X \rightarrow (\mathcal{P}(X),d_W)$ defines an isometric embedding. Denote the subspace of measures at finite Wasserstein-$1$-distance from $\delta(X)$ by $\mathcal{P}_1(X)$ and call the arising metric space $(\mathcal{P}_1(X),\W)$ \textit{Wasserstein-$1$-space} over $X$.
It turns out that $\mathcal{P}_1(X)$ is complete iff $X$ is complete, see \cite[Theorem 6.18]{Vil09}, and that the construction is functorial in the following sense.
\begin{Lem}[\cite{Oht09}, Lemma $2.1$]
\label{lemma2.1}
Let $X,Y$ be metric spaces and $f:X\rightarrow Y$ be $L$-Lipschitz. Then $f_*:\mathcal{P}_1(X)\rightarrow \mathcal{P}_1(Y)$ is welldefined and $L$-Lipschitz.
\end{Lem}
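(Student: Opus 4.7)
The plan is to verify the two assertions separately: first that $f_*$ lands inside $\mathcal{P}_1(Y)$ (well-definedness), and then that it is $L$-Lipschitz with respect to $\W$. Both are direct consequences of the change-of-variables formula for push-forward measures, combined with the Lipschitz hypothesis on $f$.

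For well-definedness, I fix $\mu\in\mathcal{P}_1(X)$ and choose any $x_0\in X$, so that $\W(\mu,\delta_{x_0})<\infty$ by the definition of $\mathcal{P}_1(X)$. I need to check that $f_*\mu$ is a separably supported Borel probability measure on $Y$ lying at finite $\W$-distance from the Dirac mass at some point. Separability of the support is automatic since $\tn{supp}(f_*\mu)\subseteq \overline{f(\tn{supp}\,\mu)}$ is the closure of a continuous image of a separable set. For the distance, by change of variables and \eqref{equation(1)},
\[
\W(f_*\mu,\delta_{f(x_0)})=\int_Y d_Y(f(x_0),y)\,\tr(f_*\mu)(y)=\int_X d_Y(f(x_0),f(x))\,\tr\mu(x)\leq L\cdot \W(\mu,\delta_{x_0})<\infty.
\]

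For the $L$-Lipschitz estimate, the idea is to transport couplings along $f\times f:X\times X\rightarrow Y\times Y$. Given $\mu,\nu\in\mathcal{P}_1(X)$ and any $K\in\Pi(\mu,\nu)$, the measure $(f\times f)_*K$ has marginals $f_*\mu$ and $f_*\nu$ because push-forward commutes with coordinate projections, hence $(f\times f)_*K\in\Pi(f_*\mu,f_*\nu)$. Applying change of variables once more and using the Lipschitz bound $d_Y(f(x),f(x'))\leq L\cdot d_X(x,x')$, I obtain
\[
\W(f_*\mu,f_*\nu)\leq \int_{Y\times Y}d_Y(y,y')\,\tr((f\times f)_*K)(y,y')=\int_{X\times X}d_Y(f(x),f(x'))\,\tr K(x,x')\leq L\int_{X\times X}d_X(x,x')\,\tr K(x,x').
\]
Taking the infimum over $K\in\Pi(\mu,\nu)$ on the right yields $\W(f_*\mu,f_*\nu)\leq L\cdot\W(\mu,\nu)$.

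I do not expect any real obstacle here: everything reduces to the change-of-variables formula and the observation that the diagonal map $f\times f$ preserves couplings. The only bookkeeping point is checking that push-forward preserves the separability-of-support condition used in the definition of $\mathcal{P}(X)$, which is routine.
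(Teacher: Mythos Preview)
Your argument is correct and is exactly the standard proof: push forward couplings along $f\times f$ and use change of variables together with the Lipschitz bound. The paper does not give its own proof of this lemma but simply cites \cite[Lemma~2.1]{Oht09}, so there is nothing further to compare.
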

If $b:\mathcal{P}_1(X)\rightarrow X$ is a $1$-Lipschitz retraction for $\delta:X \rightarrow \mathcal{P}_1(X)$, then we call $b$ a \tb{contracting barycenter map} on $X$.
\begin{Ex}
$1.$ Let $X$ be a Banach space. For $\mu\in \mathcal{P}_1(X)$  set $b(\mu):=\int_X x \ \tr \mu (x)$. This Bochner integral is welldefined and  defines a barycenter map. To check $b$ is contracting let $K\in \Pi(\mu,\nu)$. Then
\begin{align}
||b(\mu)-b(\nu)||&=\Big|\Big|\int_{X\times X} x \ \tr K(x,y)-\int_{X\times X} y \ \tr K(x,y)\Big|\Big|\\
&\leq \int_{X\times X} ||x-y||\ \tr K(x,y)\rightarrow \W(\mu,\nu).
\end{align}
One may show that this is the only barycenter map on $X$. If $C\subseteq X$ is convex, then restricting $b$ to $\mathcal{P}_1(C)$ gives a contracting barycenter map on $C$ in case either $C$ is closed or $X$ is finitedimensional. See \cite[Proposition~3.5]{Bas18} for more details.\par 
$2.$ Let $X$ be a Hadamard space. For $\mu \in \mathcal{P}_1(X)$ let $b(\mu)\in X$ be the unique point where $y \mapsto \int_{X} d^2(x,y)\ \tr\mu(x)$ attains its minimum. Then $b: \mathcal{P}_1(X)\rightarrow X$ defines a contracting barycenter map, see \cite{Stu03}.
\end{Ex}\par 
The following reduction is an incarnation of \cite[Lemma $3.5$]{Oht09}
\begin{proof}[Proposition~\ref{proposition1.5} $\Rightarrow$ Theorem~\ref{theorem1.1}]
Let $b:\mathcal{P}_1(X)\rightarrow X$ be a contracting barycenter map on $X$, $\iota:H^2\rightarrow \mathcal{P}_1(\K)$ an isometric embedding extending $\delta$ and $\eta:\K\rightarrow X$ an $L$-Lipschitz curve. Then $f:=b\circ \eta_* \circ \iota$ is $L$-Lipschitz by Lemma~\ref{lemma2.1}  and for $P\in \K$ \[f(P)=b(\eta_*(\iota(P)))=b(\eta_*(\delta_P))=b(\delta_{\eta(P)})=\eta(P).\]
\end{proof}
As a byproduct of this proof we get that for Banach spaces~$X$ the extension operator $E:\tn{Lip}(\K,X)\rightarrow \tn{Lip}(H^2,X)$ given by theorem~\ref{theorem1.1} is linear and functorial in $X$.
\subsection{Conical bicombings}
\label{section2.2}
Let $X$ be a geodesic metric space. A \textit{conical (geodesic) bicombing} on $X$ is a map $\sigma:X\times X \times[0,1]\rightarrow X$, such that for every $x,y\in X$ the map $\sigma_{x,y}:=\sigma(x,y,-)$ is a constant speed shortest path connecting $x$ and $y$ and for all $x,y,x',y'\in X$, $t \in [0,1]:$ 
\begin{equation} 
d\left(\sigma_{x,y}(t),\sigma_{x',y'}(t)\right) \leq (1-t) d(x,x')+t d(y,y').
\end{equation}
\begin{Ex}
\begin{enumerate}[label={\arabic*.}]
\item Let $X$ be a normed space. A conical bicombing $\sigma$ on $X$ is given by $\sigma(x,y,t):=(1-t)x+ty$. This conical bicombing on $X$ is unique, see \cite[Theorem 1]{GM81}. Restriction of $\sigma$ to $C \times C \times [0,1]$ gives a conical bicombing on any convex subset $C$ of $X$.
\item Let $X$ be a $\tn{CAT}(0)$ space in the sense of \cite{BBI01}. Then the unique geodesic bicombing $\sigma$ on $X$ is conical. More generally uniquely geodesic spaces admitting conical bicombings are called \textit{Busemann spaces} and have been studied long before the notion of conical bicombings had been invented, see \cite{Pap13} and the references therein.
\item Let $X$ be a metric space. A conical bicombing $\sigma$ on $\mathcal{P}_1(X)$ is given by $\sigma(\mu,\nu,t):=(1-t)\cdot\mu+t\cdot\nu$. This is actually a special case of $1.$ as $\mathcal{P}_1(X)$ may be considered a convex subset of the free Banach space~$\mathcal{F}(X)$, see \cite{AP19}.
\item Let $X$ be an injective metric space as studied by John R. Isbell in \cite{Isb64}. Then a conical bicombing $\sigma$ on $X$ may be defined applying the universal property of $X$. See \cite[Lemma 2.1]{DL15} and the following remark therein for the details.
\end{enumerate}
\end{Ex}
I was told about the following equivalence by Giuliano Basso.
\begin{Thm}
\label{theorem2.4}
Let $X$ be a metric space.
\begin{enumerate}[label={\arabic*.}]
\item \label{theorem2.4.1}
If $X$ admits a contracting barycenter map, then $X$ admits a conical bicombing.
\item \label{theorem2.4.2}
If $X$ is complete and admits a conical bicombing, then $X$ admits a contracting barycenter map.
\end{enumerate}
\end{Thm}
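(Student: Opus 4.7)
For part \ref{theorem2.4.1}, assume $X$ admits a contracting barycenter map $b$. The natural candidate is
\[
\sigma(x,y,t):=b\bigl((1-t)\delta_x+t\delta_y\bigr).
\]
The endpoint identities $\sigma(x,y,0)=x$, $\sigma(x,y,1)=y$ are immediate from the barycenter axiom. For constant speed, for $s\le t$ the explicit coupling $(1-t)\delta_{(x,x)}+(t-s)\delta_{(x,y)}+s\delta_{(y,y)}$ gives $\W((1-s)\delta_x+s\delta_y,(1-t)\delta_x+t\delta_y)\le (t-s)d(x,y)$; combined with $b$ being $1$-Lipschitz and the fact that $d(\sigma(x,y,0),\sigma(x,y,1))=d(x,y)$, equality is forced throughout. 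The conical inequality follows by applying $b$ to the coupling $(1-t)\delta_{(x,x')}+t\delta_{(y,y')}$ between $(1-t)\delta_x+t\delta_y$ and $(1-t)\delta_{x'}+t\delta_{y'}$.

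Part \ref{theorem2.4.2} is the substantial direction. Assume $X$ is complete and carries a conical bicombing $\sigma$. The plan is to build $b$ first on the dense subclass $\mathcal{F}\subseteq\mathcal{P}_1(X)$ of finitely supported measures with rational weights, and then extend by completeness. After clearing denominators, any $\mu\in\mathcal{F}$ admits a uniform presentation $\mu=\frac{1}{N}\sum_{i=1}^N\delta_{x_i}$ with possible repetitions. Adapting the Es-Sahib--Heinich construction to bicombings, one defines for each tuple $(x_1,\dots,x_n)\in X^n$ a symmetric $n$-ary average $S_n(x_1,\dots,x_n)$ as the limit of an iteration that replaces a tuple by its cyclic ``leave-one-out'' $(n-1)$-ary averages and collapses back via repeated applications of $\sigma(\cdot,\cdot,\tfrac12)$. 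The conical inequality provides exactly the contraction needed for convergence, and symmetry of the recursion gives permutation invariance, so $b(\mu):=S_N(x_1,\dots,x_N)$ is independent of the chosen presentation.

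It remains to verify that $b$ is $\W$-$1$-Lipschitz on $\mathcal{F}$: any pair $\mu,\mu'\in\mathcal{F}$ can, after refining to a common denominator, be matched by a permutation that realizes their $\W$-distance, and step by step along the iteration the conical inequality telescopes to the bound $d(b(\mu),b(\mu'))\le \frac{1}{N}\sum_i d(x_i,x_i')=\W(\mu,\mu')$. Extension to all of $\mathcal{P}_1(X)$ is then automatic from completeness of $X$, and the identity $b(\delta_x)=x$ is built into the construction. The main obstacle is precisely this convergence-and-symmetry analysis underlying $S_n$: unlike the Hadamard case there is no strict convexity of $d^2$ and no variance functional to minimize, so Sturm's argument is unavailable, and one has to squeeze Cauchyness out of the explicitly contracting $\tfrac{1}{n}$-weighted step together with completeness.
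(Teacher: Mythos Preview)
Your part~\ref{theorem2.4.1} is correct and identical to the paper's: the formula $\sigma(x,y,t):=b((1-t)\delta_x+t\delta_y)$ is exactly what is written there, and you have merely spelled out the straightforward verifications.

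For part~\ref{theorem2.4.2} the paper does not carry out a construction at all; it cites \cite[Proposition~1.3]{BMar} to replace the given conical bicombing by a \emph{reversible} one, and then invokes \cite[Theorem~3.4]{Bas18} for the barycenter map. Your plan is in effect a sketch of what lies behind the second citation (an Es-Sahib--Heinich/Navas iterated-averaging scheme), so you are not taking a different route so much as unpacking the black box.

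There is, however, a concrete gap: you never secure reversibility of $\sigma$, and without it the construction fails already at $n=2$. A conical bicombing need not satisfy $\sigma(x,y,\tfrac12)=\sigma(y,x,\tfrac12)$, so your base map $S_2(x,y)=\sigma(x,y,\tfrac12)$ is not symmetric, and then no $S_n$ is permutation invariant---which you need both for well-definedness of $b(\mu)$ and for the Birkhoff matching argument in the Lipschitz step. The paper's appeal to \cite{BMar} is precisely what fixes this, and it is not a cosmetic step. A second point you gloss over is presentation independence under refinement: that $S_{kN}$ applied to a $k$-fold repeated list agrees with $S_N$ on the original list does not follow from symmetry plus the $\tfrac{1}{N}\sum d(x_i,x_i')$ bound; it needs a separate inductive argument from the leave-one-out recursion. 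Both issues are dealt with in \cite{Bas18}, but your plan should name them explicitly rather than absorb them into ``convergence-and-symmetry analysis''.
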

\begin{proof}
\tb{$1.:$} Let $b$ be a contracting barycenter map on $X$. Define the conical bicombing on $X$ by $\sigma(x,y,t):=b((1-t)\delta_x+t\delta_y)$.\par
\tb{$2.:$} By~\cite[Proposition 1.3]{BMar} there exists a reversible conical bicombing on $X$. So one may apply \cite[Theorem~3.4]{Bas18} to obtain the result.
\end{proof}
Note that in general neither conical bicombings nor barycenter maps need to be unique, see \cite[Example 3.4]{DL15}.
\section{Embedding $H^2$ into $\mathcal{P}_1(\K)$}
\label{section3}
\subsection{Extension to one single point.}
\label{section3.1}
In this paragraph we prove Lemma~\ref{lemma1.6}. First we investigate the analytic properties of the distance function $d_P:\K \rightarrow \R$, $Q\mapsto \Ds(P,Q)$ for a fixed $P \in H^2\fgebackslash \K$.
\begin{Lem}
\label{lemma3.1}
Let $P \in H^2\fgebackslash \K$ and $B \in \K$ such that $d_P(B)=\Ds(B, \K)$. Parametrize $\K$ by $(-\pi,\pi]$ such that $0$ corresponds to $B$. Then $d_P$ is smooth and for $t \in (-\pi,\pi]$
\begin{enumerate}[label={\arabic*.}]
\item $d_P'(t)\leq 0$ if $t\leq 0$ and $d_P'(t)\geq 0$ if $t\geq 0$.
\item \label{lemma3.1.2}
$d_P''(t)\geq 0$ if $|t|\leq \frac{\pi}{2}$ and $d_P''(t)\leq 0$ if $|t|\geq \frac{\pi}{2}$.
\item \label{lemma3.1.3}
$d_P'''(t)\geq 0$ if $t\leq 0$ and $d_P'''(t)\leq 0$ if $t\geq 0$.
\item \label{lemma3.1.4} 
$d_P'(t+\pi)=-d_P'(t)$  and $d_P''(t+\pi)=-d_P''(t)$.
\item \label{lemma3.1.5}
$d_P'(-t)=-d_P'(t)$ and $d_P''(-t)=d_P''(t)$.
\end{enumerate}
\end{Lem}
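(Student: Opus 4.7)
The plan is to reduce to a single explicit trigonometric formula and then derive all five properties by direct calculation. Using the transitive action of $SO(3)$ on point-pairs at a fixed distance, I would normalize coordinates so that $B=(1,0,0)\in \K\subset \Sp$ and, since $B$ is by hypothesis the foot on $\K$ of the perpendicular from $P$, place $P=(\cos\alpha,0,\sin\alpha)$ for some $\alpha\in(0,\Pih]$. Parametrizing $\K$ by $Q(t)=(\cos t,\sin t,0)$, the spherical law of cosines gives the closed form
\[ \cos d_P(t) \;=\; c\cos t, \qquad c:=\cos\alpha\in[0,1). \]
Because $c<1$, we have $d_P(t)\in(0,\pi)$ for every $t$, so smoothness of $d_P$ is immediate from the implicit function theorem applied to $\arccos$.

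Properties \ref{lemma3.1.4} and \ref{lemma3.1.5} would then follow with no work: the substitution $t\mapsto t+\pi$ flips the sign of $\cos t$, giving $d_P(t+\pi)=\pi-d_P(t)$, from which both displayed identities of \ref{lemma3.1.4} follow by differentiation; the substitution $t\mapsto -t$ leaves $\cos t$ invariant, giving $d_P(-t)=d_P(t)$ and hence \ref{lemma3.1.5}.

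For the remaining sign claims I would differentiate the relation $\cos d_P(t)=c\cos t$ implicitly. Two differentiations give
\[ \sin d_P\cdot d_P' \;=\; c\sin t, \qquad \sin d_P\cdot d_P'' \;=\; \cos d_P\cdot\bigl(1-(d_P')^2\bigr), \]
and a third differentiation, after substituting the formula for $d_P''$ back in, yields
\[ \sin^2 d_P\cdot d_P''' \;=\; -\,d_P'\cdot\bigl(1-(d_P')^2\bigr)\cdot\bigl(1+2\cos^2 d_P\bigr). \]
The crucial auxiliary identity, obtained by squaring the first equation and using $\sin^2 d_P=1-c^2\cos^2 t$, is
\[ 1-(d_P')^2 \;=\; \frac{1-c^2}{\sin^2 d_P} \;>\; 0, \]
where strict positivity uses $P\notin \K$, i.e.\ $c<1$.

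With the three displayed identities and the positivity of $1-(d_P')^2$ in hand, every sign statement reduces to inspecting one factor: property $1$ follows because $d_P'$ has the sign of $\sin t$; property \ref{lemma3.1.2} follows because $d_P''$ has the sign of $\cos d_P = c\cos t$, hence of $\cos t$; property \ref{lemma3.1.3} follows because $d_P'''$ has the sign of $-d_P'$, which is opposite to that of $\sin t$. I do not anticipate a serious obstacle in this proof; the only point requiring a brief argument is the initial normalization, specifically that the hypothesis $d_P(B)=\Ds(P,\K)$ forces $P$ to lie on the meridian through $B$, which is what allows the reduction to the one-parameter formula $\cos d_P(t)=c\cos t$.
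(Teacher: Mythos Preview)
Your proposal is correct and follows essentially the same route as the paper: both reduce to the closed form $d_P(t)=\arccos(k_P\cos t)$ via the spherical law of cosines (your $c$ is the paper's $k_P$) and then read off all five claims by differentiation. The only cosmetic difference is that the paper differentiates $\arccos(k_P\cos t)$ explicitly to obtain closed formulas for $d_P',d_P'',d_P'''$, whereas you differentiate the relation $\cos d_P=c\cos t$ implicitly and extract the signs from the factored identities; your symmetry argument for items~\ref{lemma3.1.4} and~\ref{lemma3.1.5} is slightly slicker, but the substance is identical.
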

Here and in the following we consider $\K$ as boundary circle of the standard Riemannian hemisphere $H^2$. Apparently there is a natural $\R$ action on $\K$ by orientation preserving isometries that we denote by $+$. When we speak of parametrizations of $\K$, then we always mean orientation preserving unit speed parametrizations. Derivatives of functions defined on $\K$ such as $d_P$ are to be understood with respect to such parametrizations. When it does not lead to confusion we identify points of $\K$ and in the parametrizing interval of $\R$. 
\begin{proof}
Even though most parts of Lemma~\ref{lemma3.1} admit geometric arguments for sake of shortness we do the analytic computation. By the spherical cosine theorem $d_P(t)=\arccos(k_P \cos(t))$ where $k_P:=\cos(\Ds(P,B))$. So differentiating
\begin{align}
\label{equation(2)}
d_P'(t)&=\frac{k_P \sin(t)}{\sqrt{1-k^2_P\cos^2(t)}}\\
d_P''(t)&=\frac{(k_P-k^3_P)\cos(t)}{(1-k_P^2\cos^2(t))^{\frac{3}{2}}}\\
d_P'''(t)&=\frac{(k_P-k_P^3)(-1-2k_P^2\cos^2(t))}{(1-k_P^2\cos^2(t))^{\frac{5}{2}}}\sin (t).
\end{align}
As $k_P-k_P^3\geq 0$ and $-1-2k_P^2\cos^2(t)\leq 0$ this implies all the claims.
\end{proof}
We prove the following refined version of Lemma~\ref{lemma1.6}.
\begin{Lem}
\label{lemma3.2}
Let $P \in H^2 \fgebackslash \K$, $\mu \in \mathcal{P}_1\left(\K\right)$ and $T$ the antipodal map of $\K$. Then the following are equivalent:
\begin{enumerate}[label={\arabic*.}]
\item 
\label{lemma3.2.1}
For all $Q \in \K$ we have $\W(\mu, \delta_Q)=d_P(Q)$.
\item
\label{lemma3.2.2}
For all $Q \in \K$
\begin{equation}
\mu([Q,Q+\pi))=\Ha-\Ha d_P'(Q).
\end{equation}
\item 
\label{lemma3.2.3}
There exists $\nu \in \mathcal{P}_1(\K)$ such that $T_* \nu=\nu$ and \[
\mu=\Ha (d_P'')^+ \cdot \mathcal{H}^1_{\K}+(1-k_P)\cdot\nu.
\]
\end{enumerate}
\end{Lem}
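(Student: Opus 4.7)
The plan is to prove the two equivalences separately, working throughout in the parametrization of Lemma~\ref{lemma3.1} with $B$ at $0\in S^1$ so that the sign patterns and identities for $d_P,d_P',d_P''$ are directly available; in particular $k_P\in[0,1)$ and $d_P'(\pm\pi/2)=\pm k_P$ by~\eqref{equation(2)}. For $1\Leftrightarrow 2$, by~\eqref{equation(1)} the function $f(Q):=d_W(\mu,\delta_Q)=\int_{S^1}d_{S^1}(Q,y)\,d\mu(y)$ is $1$-Lipschitz; differentiation under the integral via $\frac{\partial}{\partial Q}d_{S^1}(Q,y)=\mathbbm{1}_{(Q-\pi,Q)}(y)-\mathbbm{1}_{(Q,Q+\pi)}(y)$ gives $f'(Q)=1-2\mu((Q,Q+\pi))$ almost everywhere, from which~\ref{lemma3.2.1} implies~\ref{lemma3.2.2} (continuity of the RHS of~\ref{lemma3.2.2} handles possible atoms consistently with the half-open interval). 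Conversely, equal derivatives force $f-d_P$ constant, and the constant vanishes because both sides have mean $\pi/2$: for $f$ by Fubini and $\int_{S^1}d_{S^1}(Q,y)\,dQ=\pi^2$, and for $d_P$ via $d_P(Q)+d_P(Q+\pi)=\pi$ (from $\arccos(-x)=\pi-\arccos(x)$ applied to $d_P(t)=\arccos(k_P\cos t)$).

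For $3\Rightarrow 2$, I plug the decomposition into $\mu([Q,Q+\pi))$, obtaining $\tfrac{1}{2}I(Q)+(1-k_P)\nu([Q,Q+\pi))$ with $I(Q):=\int_Q^{Q+\pi}(d_P'')^+\,dt$. Antipodal symmetry $T_*\nu=\nu$ gives $\nu$-mass $\tfrac{1}{2}$ to every half-circle. For $I$, items~\ref{lemma3.1.2} and~\ref{lemma3.1.4} yield $(d_P'')^+(Q+\pi)=(d_P'')^-(Q)$, so $I'(Q)=-d_P''(Q)$; evaluating at $Q=-\pi/2$, where $(d_P'')^+=d_P''$ on the whole interval, gives $I(-\pi/2)=2k_P$ by~\ref{lemma3.1.5} and~\eqref{equation(2)}. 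Hence $I(Q)=k_P-d_P'(Q)$, and substitution produces~\ref{lemma3.2.2}.

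For $2\Rightarrow 3$, set $\nu:=(1-k_P)^{-1}[\mu-\tfrac{1}{2}(d_P'')^+\mathcal{H}^1_{S^1}]$; its total mass is $1$ because $\int_{S^1}(d_P'')^+\,dt=2k_P$ by the calculation above. Applying~\ref{lemma3.2.2} at $Q$ and at $Q+\pi$ together with item~\ref{lemma3.1.4} shows that $\mu-T_*\mu$ and $\tfrac{1}{2}d_P''\,\mathcal{H}^1_{S^1}$ agree on every half-circle, hence on every arc by subtracting two half-circles, hence on every Borel set; combined with the elementary pointwise identity $(d_P'')^+(s)-(d_P'')^+(s-\pi)=d_P''(s)$ (a two-case check based on the support of $(d_P'')^+$), this forces $\nu-T_*\nu=0$, i.e.\ $T$-invariance.

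The step I expect to be genuinely delicate is non-negativity of $\nu$, which is not a formal consequence of the decomposition --- the naive argument is circular. The trick is to use the identity $\mu-T_*\mu=\tfrac{1}{2}d_P''\,\mathcal{H}^1_{S^1}$ from the previous paragraph: for any Borel $A\subseteq[-\pi/2,\pi/2]$, where $(d_P'')^+=d_P''\ge 0$, it gives $\mu(A)=\mu(T^{-1}A)+\tfrac{1}{2}\int_A d_P''\,dt\ge \tfrac{1}{2}\int_A(d_P'')^+\,dt$, the inequality using only $\mu(T^{-1}A)\ge 0$ (note $T^{-1}A$ lies outside $[-\pi/2,\pi/2]$, so there is no overlap); outside $[-\pi/2,\pi/2]$ the integrand $(d_P'')^+$ vanishes and the inequality is trivial. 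All remaining pieces reduce to routine computation with the formulas of Lemma~\ref{lemma3.1}.
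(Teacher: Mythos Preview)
Your overall strategy matches the paper's: both proofs differentiate $Q\mapsto d_W(\mu,\delta_Q)$ to get the half-circle masses, then pass between \ref{lemma3.2.2} and \ref{lemma3.2.3} via the half-circle integral $I(Q)=\int_Q^{Q+\pi}(d_P'')^+$, and invoke a $\pi$--$\lambda$ argument for $T$-invariance. Two of your variants are genuinely nicer than the paper's: determining the additive constant in $\ref{lemma3.2.2}\Rightarrow\ref{lemma3.2.1}$ via the common mean $\pi/2$ (instead of evaluating at $B$ as the paper does), and letting the single identity $\mu-T_*\mu=\tfrac12 d_P''\,\mathcal{H}^1_{S^1}$ do double duty for both $T$-invariance and positivity of $\nu$.

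There is, however, one real gap. From ``$\mu-T_*\mu$ and $\tfrac12 d_P''\,\mathcal{H}^1_{S^1}$ agree on every half-circle'' you cannot conclude agreement on every arc just by ``subtracting two half-circles'': subtracting $[a,a+\pi)$ and $[b,b+\pi)$ gives only $m([a,b))-m([a+\pi,b+\pi))$ for the difference $m$, not $m([a,b))$ itself. (Indeed $\delta_0+\delta_\pi-\delta_{\pi/2}-\delta_{3\pi/2}$ is a nonzero signed measure vanishing on every half-circle.) What saves the argument is that both $\mu-T_*\mu$ and $\tfrac12 d_P''\,\mathcal{H}^1_{S^1}$ are $T$-\emph{antisymmetric}: $T_*(\mu-T_*\mu)=-(\mu-T_*\mu)$ trivially, and $T_*(d_P''\,\mathcal{H}^1)=-d_P''\,\mathcal{H}^1$ by Lemma~\ref{lemma3.1}.\ref{lemma3.1.4}. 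Hence $m$ is $T$-antisymmetric, so the half-circle subtraction gives $0=m([a,b))-m([a+\pi,b+\pi))=m([a,b))+m([a,b))$, i.e.\ $m([a,b))=0$. You should state this explicitly; once you do, your positivity argument (which relies on the full identity $\mu-T_*\mu=\tfrac12 d_P''\,\mathcal{H}^1_{S^1}$) goes through as written. The paper sidesteps this by computing $T_*\bar\nu([t,s))-\bar\nu([t,s))=\bar\nu([s,s+\pi))-\bar\nu([t,t+\pi))$ directly, which is the same computation in different packaging. A minor further point: in $\ref{lemma3.2.1}\Rightarrow\ref{lemma3.2.2}$ the paper takes the one-sided derivative, which exists \emph{everywhere} and yields the half-open interval on the nose; your a.e.\ two-sided derivative needs the extra sentence you gesture at to upgrade to all $Q$.
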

Lemma~\ref{lemma1.6} follows from Lemma~\ref{lemma3.2} by setting $h_P:=\Ha (d_P'')^+$.
\begin{proof}
Let $B\in \K$ such that $\Ds(P,\K)=\Ds(P,B)$ and parametrize $\K$ such that $B$ corresponds to $0$. Then by Lemma~\ref{lemma3.1}.\ref{lemma3.1.2} \begin{equation}
\label{equation(3)}
(d_P'')^+=\mathbbm{1}_{(-\Pih,\Pih)}d_P''.
\end{equation} \par 
We take the left derivative of \eqref{equation(1)}, use the dominated convergence theorem and the fact that $\mu$ is a probability measure to get
\begin{align}
\frac{\partial_- \left(\W(\mu,\delta_Q)\right)}{\partial Q}&=\int_\K \frac{\partial_- \left(\Dk (Q,R)\right)}{\partial Q} \tr \mu(R)\\
&=\mu([Q-\pi,Q))-\mu([Q,Q+\pi))\\
\label{equation(4)}
&=1-2\mu([Q,Q+\pi)).
\end{align}
\ \par 
$\ref{lemma3.2.1}\Rightarrow \ref{lemma3.2.2}:$ By \eqref{equation(4)} $d_P'(Q)=1-2\mu([Q,Q+\pi))$ for all $Q \in \K$.
Solving this for $\mu([Q,Q+\pi))$ gives \ref{lemma3.2.2}.\par
$\ref{lemma3.2.2}\Rightarrow \ref{lemma3.2.3}$: Set $\overline{\nu}:=\mu-\Ha (d_P'')^+\mathcal{H}^1_\K$ which is a priori a finite signed measure on $\K$. First we show that $T_*\overline{\nu}=\overline{\nu}$. The spherical intervals of length $\leq \pi$ form a $\pi$-system generating the Borel $\sigma$-algebra of $\K$. So by Dynkins $\pi$-$\lambda$-theorem it suffices to check $T_*\overline{\nu}([t,s))=\overline{\nu}([t,s))$ for all intervals $[t,s)\subset \R$ of length $\leq \pi$. As $T_*\overline{\nu}([t,s))=\overline{\nu}([t+\pi,s+\pi))$ by symmetry we may assume $|t|\leq \Pih$. Then
\begin{align}
&T_*\overline{\nu}([t,s))-\overline{\nu}([t,s))=\overline{\nu}([s,s+\pi))-\overline{\nu}([t,t+\pi))\\
\label{equation(5)}
=&\underbrace{\mu([s,s+\pi))-\mu([t,t+\pi))}_{=:(X)}+\Ha \underbrace{\left(\int^{t+\pi}_t (d_P''(r))^+\tr r-\int^{s+\pi}_{s}(d_P''(r))^+ \tr r\right)}_{=:(Y)}.
\end{align}
We calculate $(X)$ and $(Y)$ separately.
\begin{equation}
\label{equation(6)}
(X)=\Ha-\Ha d_P'(s)-\left(\Ha-\Ha d_P'(t)\right)=\Ha d_P'(t)- \Ha d_P'(s)
\end{equation}
and 
\begin{equation}
\label{equation(7)}
(Y)=d_P'\left(\Pih\right)-d_P'(t)-\left(\begin{cases}
d_P'\left(\Pih\right)-d_P'(s) &,|s|\leq \Pih\\
d_P'(s+\pi)-d_P'\left(-\Pih\right) &, |s|>\Pih
\end{cases}\right)
=d_P'(s)-d_P'(t)
\end{equation}
where we used \eqref{equation(3)} and Lemma~\ref{lemma3.1}.\ref{lemma3.1.4}. Plugging \eqref{equation(6)} and \eqref{equation(7)} into \eqref{equation(5)} proves $T_*\overline{\nu}=\overline{\nu}$. So $\overline{\nu}$ is $\pi$-periodic and by \eqref{equation(3)} $\overline{\nu}_{|[\Pih,\frac{3\pi}{2})}=\mu_{|[\Pih,\frac{3\pi}{2})}\geq 0$. Hence $\overline{\nu}$ is a positive measure. Furthermore
\begin{equation}
\overline{\nu}(\K)=\mu(\K)-\Ha\int^{\frac{\pi}{2}}_{-\frac{\pi}{2}}d_P''(t) \ \tr t=1-d_P'\left(\Pih\right)=1-k_P
\end{equation}
where we used that \eqref{equation(2)} implies $k_P=d_P'\left(\Pih\right)$. Setting $\nu:=\frac{1}{1-k_P}\overline{\nu}\in \mathcal{P}_1(\K)$ completes this implication.\par
$\ref{lemma3.2.3}\Rightarrow \ref{lemma3.2.2}$: If $t \in \left(-\frac{\pi}{2},\frac{\pi}{2}\right]$ then
\begin{align}
\mu([t,t+\pi))&=(1-k_P)\nu([t,t+\pi))+\Ha \int^{\frac{\pi}{2}}_t d_P''(t) \ \tr t &&\\
&=\frac{1-k_P}{2}+\Ha d_P'\left(\frac{\pi}{2}\right)-\Ha d_P'\left(t\right) && \tn{as } T_*\nu=\nu \\
&=\Ha-\Ha d_P'(t) && \tn{as }k_P=d_P'\left(\Pih\right)
\end{align}
Going to complements and using Lemma~\ref{lemma3.1}.\ref{lemma3.1.4} provides the case $t \notin \left(-\frac{\pi}{2},\frac{\pi}{2}\right]$.\par 
$\ref{lemma3.2.2}+\ref{lemma3.2.3}\Rightarrow \ref{lemma3.2.1}$:
\eqref{equation(4)} and $\ref{lemma3.2.2}$ imply that
$\frac{\partial_-}{\partial Q}\W(\mu,\delta_Q)=d_P'(Q)$ for all $Q \in \K$. Furthermore the functions $d_P$ and $Q \mapsto \W(\mu,\delta_Q)$ are $1$-Lipschitz functions $\K \rightarrow \R$.
So by the fundamental theorem of calculus it suffices to check $\W(\mu,\delta_B)=d_P(B)$. By~\eqref{equation(1)}
\begin{equation}
\label{equation(8)}
\W(\mu,\delta_B)=\Ha\int^{\frac{\pi}{2}}_{-\frac{\pi}{2}}|t| d_P''(t) \tr t +(1-k_P)\int_\K d_\K(B,R) \ \tr \nu(R).
\end{equation}
We calculate the terms separately
\begin{align}
&\Ha\int^{\frac{\pi}{2}}_{-\frac{\pi}{2}}|t| d_P''(t) \tr t=\int^{\Pih}_0 td_P''(t)\tr t=[td_P'(t)]_{t=0}^{\Pih}-\int^{\Pih}_0 d_P'(t) \ \tr t\\
\label{equation(9)}
=&\Pih d_P'\left(\Pih\right)-d_P\left(\Pih\right)+d_P(0)=\Pih k_P-\Pih+d_P(B).
\end{align}
and
\begin{align}
&\int_\K d_\K(B,R)\tr \nu(R)=\int_\K d_\K(B,R)\tr T_*\nu(R)=\int_\K d_\K(B,T(R))\tr \nu(R)\\
=&\int_\K (\pi-d_\K(B,R))\tr \nu(R)=\pi-\int_\K d_\K(B,R)\tr \nu(R).
\end{align}
So $\int_\K d_\K(B,R)\tr \nu(R)=\Pih$. Plugging this and \eqref{equation(9)} into \eqref{equation(8)} gives the desired equality $\W(\mu,\delta_B)=\Ds(P,B)$.
\end{proof}
\subsection{Proof of Proposition~\ref{proposition1.5}}
\label{section3.2}
Let $X$ be a metric space. A variant of the Kantorovich-Rubinstein duality states that for $\mu,\nu \in \mathcal{P}_1(X)$ one has
\begin{equation}
\label{equation(10)}
\W(\mu,\nu)=\sup_{f} \left(\int_X f(x)\tr \mu(x)-\int_X f(x)\tr \nu(x)\right).
\end{equation}
where the supremum in \eqref{equation(10)} is taken over all $1$-Lipschitz functions $f:X \rightarrow \R$, see \cite[Theorem 2]{Kel85}.\par
For $X=\R$ the maximizers in~\eqref{equation(10)} are given in terms of the distribution functions $F_\mu$ and $F_\nu$ as precisely those $1$-Lipschitz $f:\R \rightarrow \R$ satisfying $f'=\tn{sgn}(F_\nu-F_\mu)$ almost everywhere on $\{F_\mu \neq F_\nu\}$ with respect to $\Leb^1$, see \cite[Corollary 2.2]{CM95}.\\
For $X=\K$ the issue is slightly more delicate. Call $C \in \K$ a \textit{balanced cut point} for~$(\mu,\nu)$ with Borel partition $\K= M \cupdot N$ if $\mathcal{H}_\K^1(M)=\mathcal{H}_\K^1(N)=\pi$ and 
\begin{align}
\mu([C,R))&\leq \nu([C,R)) &&;\forall R \in M\\
\mu([C,R))&\geq \nu([C,R)) &&;\forall R \in N.
\end{align}
\begin{Pro}[\cite{CM95}, Propositions~$3.2$ \& $3.6$]
\label{proposition3.3}
Let $\mu,\nu \in \mathcal{P}_1\left(\K\right)$. Then there exists a balanced cutpoint for $(\mu,\nu)$ and for every balanced cutpoint $C$ with Borel partition $\K=M\cupdot N$ one has
\[
\W(\mu, \nu)=\int_\K f(R) \ \tr (\mu-\nu)R
\]
where $f: \K \rightarrow \R$ is $1$-Lipschitz and such that
\[
f'=\mathbbm{1}_{M}-\mathbbm{1}_{N}
\]
holds $\mathcal{H}^1_\K$-almost everywhere.
\end{Pro}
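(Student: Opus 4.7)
I plan to first establish existence of a balanced cutpoint by a median-type argument on the lifted distribution function, and then derive the duality formula by sandwiching the candidate value $\int_0^{2\pi}|\phi_C|\,\tr t$ between the Kantorovich--Rubinstein lower bound and the elementary one-dimensional upper bound obtained by cutting $\K$ at $C$. Parametrize $\K$ by $[0,2\pi)$ and set $G(t):=\mu([0,t))-\nu([0,t))$, which vanishes at $0$ and at $2\pi$; extending periodically yields $\tilde{G}:\R\to\R$. For a candidate cutpoint at parameter $t_C$ one computes $\phi_C(t):=\mu([C,C+t))-\nu([C,C+t))=\tilde{G}(t_C+t)-\tilde{G}(t_C)$, so the defining inequalities of a balanced cutpoint translate into the condition that $\tilde{G}(t_C)$ be a median of $\tilde{G}$ over one period. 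Such a $t_C$ exists by a standard intermediate value argument on the bounded periodic function $\tilde G$.

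Next, given any balanced cutpoint $C$ with partition $M\cupdot N$, I define $f:[0,2\pi)\to\R$ by $f(t):=\int_0^t(\mathbbm{1}_M-\mathbbm{1}_N)(s)\,\tr s$. Then $|f'|=1$ almost everywhere and $f(2\pi)=f(0)$ since $\mathcal{H}^1_\K(M)=\mathcal{H}^1_\K(N)$, so $f$ descends to a $1$-Lipschitz function on $\K$ (its periodic extension to $\R$ being $1$-Lipschitz). A single integration by parts, using $\phi_C(0)=\phi_C(2\pi)=0$ and Fubini, yields $\int_\K f\,\tr(\mu-\nu)=-\int_0^{2\pi}f'(t)\phi_C(t)\,\tr t$; but the balancing condition is precisely $f'\phi_C\le 0$ almost everywhere, so this equals $\int_0^{2\pi}|\phi_C(t)|\,\tr t$.

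It remains to identify this quantity with $\W(\mu,\nu)$. Kantorovich--Rubinstein duality \eqref{equation(10)} gives one direction, $\W(\mu,\nu)\ge\int_\K f\,\tr(\mu-\nu)=\int_0^{2\pi}|\phi_C|\,\tr t$. For the opposite direction, viewing $\mu,\nu$ as probability measures on the interval $[C,C+2\pi)$ and applying the classical Vallender--Dall'Aglio formula, the interval Wasserstein-$1$ distance equals exactly $\int_0^{2\pi}|\phi_C|\,\tr t$; since chord length on this interval dominates $\Dk$, any optimal interval coupling is an equally good or better coupling on $\K$, whence $\W(\mu,\nu)\le\int_0^{2\pi}|\phi_C|\,\tr t$. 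The matching bounds force equality throughout and prove both assertions. The main obstacle will be the existence argument when $\mu-\nu$ has an atom at the median level of $\tilde G$: one then has to distribute that atom between $M$ and $N$ so that $\mathcal{H}^1_\K(M)=\mathcal{H}^1_\K(N)=\pi$ exactly; all remaining steps reduce to standard one-dimensional optimal transport.
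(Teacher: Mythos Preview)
The paper does not prove Proposition~\ref{proposition3.3}; it is quoted from \cite{CM95} (their Propositions~3.2 and~3.6) and used as a black box in the proof of Proposition~\ref{proposition1.5}. Your argument is correct and is in outline the standard one behind that reference: identify balanced cutpoints with parameters $t_C$ at which $\tilde G(t_C)$ is a median value of $\tilde G$, derive the identity $\int_\K f\,\tr(\mu-\nu)=\int_0^{2\pi}|\phi_C|$ via integration by parts, and sandwich this quantity between the Kantorovich--Rubinstein lower bound~\eqref{equation(10)} and the one-dimensional Vallender upper bound obtained by cutting $\K$ at $C$.

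Two small remarks on the write-up. First, calling the existence step a ``standard intermediate value argument'' undersells the issue: $\tilde G$ is only left-continuous, so one must argue that \emph{some} median value is actually attained; here one uses that $\tilde G$ has bounded variation (hence is continuous off a countable set) together with the connectedness of $\K$. Second, your description of the obstacle is slightly garbled: atoms of $\mu-\nu$ are points of $\K$, not levels of $\tilde G$; the genuine bookkeeping is that when the level set $\{\phi_C=0\}$ has positive Lebesgue measure one must split it between $M$ and $N$ to achieve $\mathcal{H}^1_\K(M)=\mathcal{H}^1_\K(N)=\pi$ exactly. Neither point affects the validity of your scheme.
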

Proposition~\ref{proposition3.3} prepares us to proof proposition~\ref{proposition1.5}.
\begin{proof}[Proof of Proposition~\ref{proposition1.5}]
Set $\iota: H^2\rightarrow P_1(\K)$ via $\iota(P):=\delta_P$ if $P \in \K$ and otherwise $\iota(P):=\mu_P$ where $\mu_P$ has density
\[
h_P:=\Ha (d_P'')^++\frac{1-k_P}{2\pi}
\] 
with respect to $\mathcal{H}^1_\K$. By Lemma~\ref{lemma1.6} the distances to the boundary points are preserved under $\iota$. So as $H^2$ is a length space it suffices to show that the restriction of $\iota $ to $H^2\fgebackslash  \K$ is distance preserving along geodesics.\par 
Let $l$ be an oriented minimizing geodesic segment in $H^2$ with starting point $C$ on $\K$, and let $\gamma\in [0,\pi]$ be the angle enclosed by $l$ and $\K$ in $C$. 
Let $A,A' \in l$ and without loss of generality $\Ds(A',C)\leq \Ds(A,C)$. Then by Lemma~\ref{lemma3.2}
\begin{equation}
\W(\mu_A,\mu_{A'})\geq \W(\mu_A,\delta_C)-\W(\mu_{A'},\delta_C)=\Ds(A,C)-\Ds(A',C)=\Ds(A,A').
\end{equation}
and hence $\iota$ is distance nondecreasing.\par 
The proof that $\mu$ is $1$-Lipschitz is more involved. Assume $\gamma\neq \Pih$. Parametrize $l$ by $a\in [0,\pi]$ as follows. Set $B_a:=C+a\in \K$ and let $A_a\in H^2$ be the intersection point of $l$ and the geodesic orthogonal to $\K$ in $B_a$. Setting $\mu_a:=\mu_{A_a}$ we have to show that for all $a_1,a_2\in(0,\pi)$.
\begin{equation}
\label{equation(11)}
\W(\mu_{a_1},\mu_{a_2})\leq \Ds(A_{a_1},A_{a_2}).
\end{equation}
\ \ \ Assume $\gamma <\Pih$ and $a_2,a_1 \leq \Pih$. Set $\mu_i:=\mu_{a_i}$, $A_i:=A_{a_i}$, $B_i:=B_{a_i}$, $d_i:=d_{A_i}$, $k_i:=k_{A_i}$ and let $h_i:=\Ha (d_i'')^++\frac{1-k_i}{2\pi}$ be the density of $\mu_i$ with respect to $\mathcal{H}^1_\K$ for $i=1,2$. Assume without loss of generality $a_2< a_1$ and hence $k_2 > k_1$. For $t \in \R$ let $C_t:=C+t\in \K$. If $C=C_0$ is a balanced cutpoint for $(\mu_1,\mu_2)$ with Borel partition $\K=[C,C_\pi)\cupdot [C_\pi,C_{2\pi})$, then by Proposition~\ref{proposition3.3} $d_C$ is a maximizer for \eqref{equation(10)} and hence by \eqref{equation(1)} and Lemma~\ref{lemma3.2}
\begin{align}
\W(\mu_1,\mu_2)&=\int_\K d_\K(Q,C)\tr(\mu_1-\mu_2)Q=\W(\mu_1,\delta_C)-\W(\mu_2,\delta_C)\\
&=\Ds(A_1,C)-\Ds(A_2,C)=\Ds(A_1,A_2).
\end{align}
So to get \eqref{equation(11)} in our special case $\gamma<\Pih$ and $a_1,a_2\leq \Pih$ it suffices to prove
 \begin{align}
\mu_{1}([C,C_t))&\leq \mu_{2}([C,C_t)) &&\forall t \in [0,\pi]\\
\label{equation(12)}
\mu_{1}([C,C_t))&\geq \mu_{2}([C,C_t)) &&\forall t \in [\pi,2\pi].
\end{align}
By Lemma~\ref{lemma3.2} and the first variation formula 
\begin{equation}
\label{equation(13)}
\mu_1([C,C_\pi))=\Ha-\Ha d_{1}'(C)=\Ha+\Ha \cos(\gamma)=\mu_2([C,C_\pi))
\end{equation}
and hence as $\mu_i$ are probability measures also $\mu_1([C_\pi,C_{2\pi}))=\mu_2([C_\pi,C_{2\pi}))$. Using this facts it follows that the following system of inequalities is equivalent to~\eqref{equation(12)}
\begin{align}
\label{equation(14)}
\mu_{1}([C,C_t))&\leq \mu_{2}([C,C_t)) &&\forall t \in \left[0,a_2+\Pih\right]\\
\label{equation(15)}
\mu_{1}([C_t,C_\pi)&\geq \mu_{2}([C_t,C_\pi)) &&\forall t\in \left[a_2+\Pih,\pi\right]\\
\label{equation(16)}
\mu_{1}([C_t,C))&\leq \mu_{2}([C_t,C)) &&\forall t \in \left[a_1-\Pih,0\right]\\
\label{equation(17)}
\mu_1([C_{-\pi},C_t))&\geq \mu_2([C_{-\pi},C_t)) && \forall t \in \left[-\pi,a_1-\Pih\right].
\end{align}
We prove \eqref{equation(14)}, \eqref{equation(15)}, \eqref{equation(16)} and \eqref{equation(17)} one by one.\par 
\eqref{equation(14)}: For all $0\leq s \leq t$ one has $h_i(C_s)=\Ha d_i''(C_s)+\frac{1-k_i}{2\pi}$. Hence by integration and the first variation formula
\begin{align}
\mu_i([C,C_t))&=\Ha d_i'(C_t)-\Ha d_{i}'(C)+t\frac{1-k_i}{2\pi}\\
&=-\Ha \cos(\gamma_{i,t})+\Ha \cos(\gamma)+t\frac{1-\cos(c_i)}{2\pi}.
\end{align}
where $c_i=c_{a_i}:=\Ds(A_i,B_i)$ and $\gamma_{i,t}=\gamma_{a_i,t}$ is the angle between the geodesic through $A_i$ and $\K$ in $C_t$. \begin{figure}[ht]
	\centering
		\includegraphics[scale=0.7]{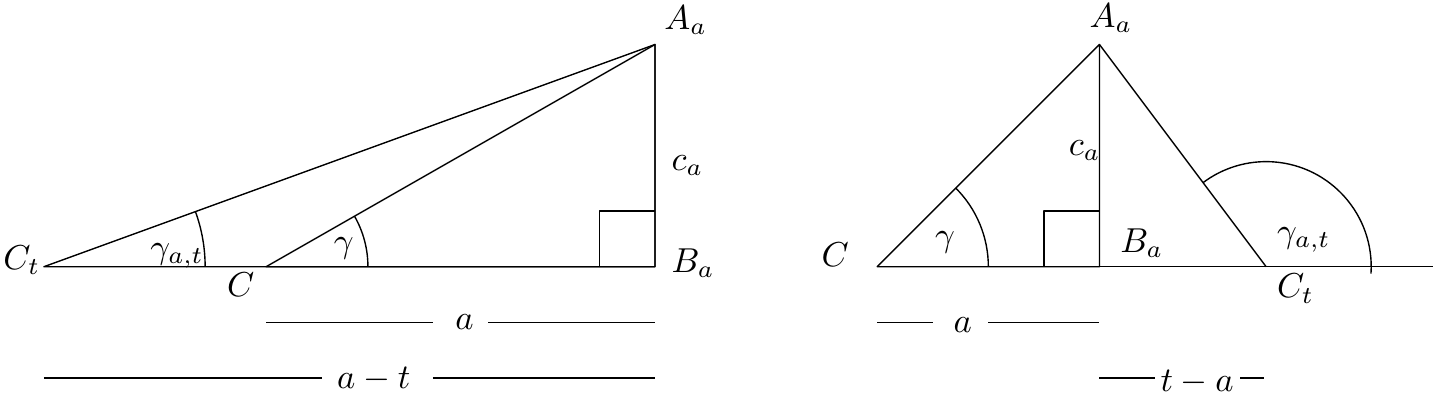}
	\caption[MapH]{Left: $-\Pih \leq t \leq 0$, Right: $a\leq t \leq \Pih$}
\end{figure}
\\
So
\begin{equation}
\mu_{2}([C,C_t))-\mu_{1}([C,C_t))=\Ha g(a_1,t)-\Ha g(a_2,t)
\end{equation}
where $g=g_\gamma:\left(0,\Pih\right]\times [-\pi,\pi]\rightarrow \R$ is defined by
\begin{equation}
g(a,t):=\cos(\gamma_{a,t})+\frac{t\cos (c_a)}{\pi}.
\end{equation}
So~\eqref{equation(14)} is implied by the following technical lemma whose proof we postpone to~\ref{section3.3}.
\begin{Lem}
\label{lemma3.4}
Let $\gamma<\Pih$ and $g=g_\gamma$ be defined as above. Then
\begin{enumerate}[label={\arabic*.}]
\item \label{lemma3.4.1} for $t \in [0,\pi]$ is $g(-,t)$ nondecreasing on $\left[\max\left\{0,t-\frac{\pi}{2}\right\},\frac{\pi}{2}\right]$.
\item \label{lemma3.4.2} for $t \in \left[-\frac{\pi}{2},0\right]$ is $g(-,t)$ nonincreasing on $\left[0,\frac{\pi}{2}\right]$.
\end{enumerate}
\end{Lem}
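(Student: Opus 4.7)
My plan is to reduce Lemma~\ref{lemma3.4} to a trigonometric inequality by spherical calculus, then verify that inequality. I first reparametrize by the arclength $\rho := \Ds(C, A_a)$ along $l$. Napier's rules applied to the right spherical triangle $CB_aA_a$ (right angle at $B_a$, angle $\gamma$ at $C$) yield $\tan(a) = \tan(\rho)\cos(\gamma)$ and $\sin(c_a) = \sin(\rho)\sin(\gamma)$, so $a \mapsto \rho$ is a smooth strictly increasing bijection of $[0,\Pih]$ onto itself, and it suffices to prove the corresponding monotonicity in $\rho$.

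Writing $l = l_{a,t} := \Ds(C_t, A_a)$ and applying the spherical law of cosines to the triangle $CC_tA_a$ (angle $\gamma$ at $C$, angle $\gamma_{a,t}$ at $C_t$), I get
\[
\cos(l) = \cos(t)\cos(\rho) + \sin(t)\sin(\rho)\cos(\gamma), \quad \cos(\gamma_{a,t}) = \frac{\cos(t)\sin(\rho)\cos(\gamma) - \sin(t)\cos(\rho)}{\sin(l)},
\]
with the sign in the second formula fixed by the first variation identity $\cos(\gamma_{a,0}) = \cos(\gamma)$. Also $\cos(c_a) = \sqrt{1-\sin^2(\rho)\sin^2(\gamma)}$. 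Differentiating $g(a,t) = \cos(\gamma_{a,t}) + t\cos(c_a)/\pi$ in $\rho$, the cross terms collapse by a Pythagorean cancellation---one checks by direct expansion that $P\cos(l) + N'M = \cos(\gamma)$, where $P, N', M$ are the three bilinear forms in $(\cos t, \sin t, \cos \rho, \sin\rho)$ arising when one differentiates the quotient---and I obtain
\[
\partial_\rho g(a,t) = \sin(\rho)\sin^2(\gamma)\left(\frac{\sin(t)}{\sin^3(l_{a,t})} - \frac{t\cos(\rho)}{\pi\cos(c_a)}\right).
\]
Both parts of Lemma~\ref{lemma3.4} thus reduce to the single inequality
\[
\pi|\sin(t)|\cos(c_a) \geq |t|\cos(\rho)\sin^3(l_{a,t}),
\]
with the sign of $t$ determining whether the bracket is nonnegative (part 1) or nonpositive (part 2).

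When $|t|\leq \Pih$, the elementary bound $|\sin(t)| \geq (2/\pi)|t|$ reduces the claim to $2\cos(c_a) \geq \cos(\rho)\sin^3(l_{a,t})$; since $\sin(c_a) = \sin(\rho)\sin(\gamma) \leq \sin(\rho)$ gives $c_a \leq \rho$ and hence $\cos(c_a) \geq \cos(\rho)$, and since $\sin(l_{a,t}) \leq 1$, this holds with a factor of $2$ to spare. This settles part 2 entirely, as well as part 1 for $t \in [0,\Pih]$. The main obstacle is the remaining range $t \in [\Pih, \pi]$ in part 1. There the hypothesis $a \geq t - \Pih$ is precisely the geometric condition $|a - t| \leq \Pih$, forcing $l_{a,t} \leq \Pih$ with equality at the boundary $a = t - \Pih$ (by the right spherical triangle $C_tB_aA_a$, whose leg $|C_tB_a| = \Pih$ there). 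Asymptotic analysis at the corner $t \to \pi$, $a \to \Pih$ shows both sides of the inequality vanish like $\pi\cos(\gamma)(\pi - t)$ to leading order, so the inequality is sharp. This tight case will require the bulk of the technical work: likely a substitution expressing $\sin^2(l_{a,t})$ in terms of $\sin(|a-t|)$ and $c_a$ via the right triangle $C_tB_aA_a$, combined with a reduction to a single-variable monotonicity statement verified by one further differentiation.
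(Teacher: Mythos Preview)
Your setup is essentially the paper's proof in disguise: the paper differentiates in $a$ and obtains
\[
\partial_a g(a,t)=\cot(\gamma)\sin(a)\left(\frac{\sin(t)}{N^3}-\frac{t\cos(a)}{\pi K^3}\right),
\]
where $K^2=\sin^2(a)+\cot^2(\gamma)$ and $N^2=\sin^2(a)+\cot^2(\gamma)\sin^2(a-t)$. Using the right triangle $CB_aA_a$ one has $\cos(\rho)=\cos(a)\cos(c_a)$ and $\cos(c_a)=\cot(\gamma)/K$, and using the right triangle $C_tB_aA_a$ one has $\cos(l_{a,t})=\cos(a-t)\cos(c_a)$, whence $\sin(l_{a,t})=N/K$. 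Substituting these into your bracket shows it equals $K^3$ times the paper's bracket, so the two derivative formulas agree up to a positive factor and the monotone change of variable $a\leftrightarrow\rho$.

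The genuine gap is that you leave the range $t\in[\Pih,\pi]$ unproven, announcing only a plan and an (accurate) asymptotic observation. In fact this case is not hard and admits the same one–line finish the paper uses. Insert the spherical Pythagorean identity $\cos(\rho)=\cos(a)\cos(c_a)$ into your target inequality; it collapses to
\[
\pi\sin(t)\ \geq\ t\,\cos(a)\,\sin^3(l_{a,t}).
\]
Now the hypothesis $a\geq t-\Pih$ (with $a,\,t-\Pih\in[0,\Pih]$) gives $\cos(a)\leq\cos\!\left(t-\Pih\right)=\sin(t)$, and $\sin(l_{a,t})\leq 1$, so
\[
t\,\cos(a)\,\sin^3(l_{a,t})\ \leq\ t\,\sin(t)\ \leq\ \pi\,\sin(t).
\]
This is exactly the paper's estimate $\sin(t)-\tfrac{t}{\pi}\cos(a)\geq\sin(t)\bigl(1-\tfrac{t}{\pi}\bigr)\geq 0$, and it closes your argument without any further differentiation or substitution. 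Your asymptotic remark that both sides vanish like $\pi\cos(\gamma)(\pi-t)$ along $a=t-\Pih$ is correct, but it reflects only that the last inequality $t\leq\pi$ becomes an equality at the endpoint, not that the estimate is delicate.
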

\eqref{equation(15)}: For $t \leq s \leq \pi$ one has
\begin{equation}
h_2(C_s)=\frac{1-k_2}{2\pi}\leq \frac{1-k_1}{2\pi}\leq h_1(C_s)
\end{equation}
which by integration immediately implies \eqref{equation(15)}.\par
\eqref{equation(16)}: By a calculation very analog to the proof of \eqref{equation(14)}
\begin{equation}
\mu_2([C_t,C))-\mu_1([C_t,C))=\Ha(g(a_2,t)-g(a_1,t))
\end{equation}
which is nonnegative by Lemma~\ref{lemma3.4}.\ref{lemma3.4.2}.
\par
\eqref{equation(17)}:
$h_1$ is constant on $I:=\left[C_{-\pi},C_{a_1-\Pih}\right]$ and by Lemma~\ref{lemma3.1}.\ref{lemma3.1.3} $h_2$ is nondecreasing on $I$. Furthermore $h_2(C_{-\pi})=\frac{1-k_2}{2\pi}<\frac{1-k_1}{2\pi}=h_1(C_{-\pi})$.\\
Assume $\mu_2([C_{-\pi},C_t))> \mu_1([C_{-\pi},C_t))$. Then $h_2(s)>h_1(s)$ for all $s \in \left[t,a_1-\Pih\right)$ which implies \[\mu_2\left(\left[C_{-\pi},C_{a_1-\Pih}\right)\right)>\mu_1\left(\left[C_{-\pi},C_{a_1-\Pih}\right)\right).\]
 So by \eqref{equation(13)} 
 \[\mu_2\left(\left[C_{a_1-\frac{\pi}{2}},C\right)\right)<\mu_1\left(\left[C_{a_1-\frac{\pi}{2}},C\right)\right)\]
 which contradicts \eqref{equation(16)}.
\par
So far we proved \eqref{equation(11)} for $\gamma < \Pih$ and $a_1,a_2\leq \Pih$. The other cases can be reduced to this one exploiting the fact that if $T$ is an isometry of $\Hs$ and $S$ its restriction to $\K$ then $S_*$ is an isometry of $\mathcal{P}_1(\K)$ and $S_*\mu_A=\mu_{T(A)}$.\par
For $\gamma>\Pih$ and $a_1,a_2\leq \Pih$ reflect in the geodesic passing through $C$ and orthogonal to $\K$ and we are in the previous case. For $\gamma\neq\Pih$ and $a_1,a_2\geq \Pih$ reflect in the geodesic passing through $C_{\Pih}$ and orthogonal to $\K$ and we are in a previous case. For $\gamma \neq \Pih$ and $a_2\leq \Pih\leq a_1$ by previous cases \eqref{equation(11)} applies for $a_1,\Pih$ and $\Pih,a_2$ respectively and hence by the triangle inequality also for $a_1,a_2$. Case $\gamma=\Pih$ maybe obtained by observing that it is nongeneric and applying the triangle inequality again.

\end{proof}
\subsection{A technical lemma}
\label{section3.3}
In this subsection we perform the proof of Lemma~\ref{lemma3.4}. We remind the reader of one of Napier's rules stating that for a nondegenerate spherical triangle $\triangle ABC$ with side length $a,b,c$
and angles $\alpha, \beta, \gamma$ satisfying $\beta=\Pih$ one has
\begin{equation}
\label{equation(18)}
\cot(\gamma)=\sin(a)\cot(c).
\end{equation}
\eqref{equation(18)} may be deduced directly from the spherical sine and cosine theorems.
\begin{proof}[Proof of Lemma~\ref{lemma3.4}]
We need to express $\cos(\gamma_{a,t})$ and $\cos(c_a)$ analytically in terms of $a,\gamma,t$. For $c_a$ by \eqref{equation(18)}
\begin{equation}
\label{equation(19)}
\cos(c_a)=\frac{\cot(c_a)}{\sqrt{1+\cot^2(c_a)}}=\frac{\frac{\cot(\gamma)}{\sin(a)}}{\sqrt{1+\frac{\cot^2(\gamma)}{\sin^2(a)}}}=\frac{\cot(\gamma)}{\underbrace{\sqrt{\sin^2(a)+\cot^2(\gamma)}}_{=:K(a)}}
\end{equation}
Applying \eqref{equation(18)} to $\bigtriangleup A_a B_a C$ and $\bigtriangleup A_a B_aC_t$
\begin{equation}
\cot^2(\gamma_{a,t})=\sin^2(a-t)\cot^2(c_a)=\frac{\sin^2(a-t)\cot^2(\gamma)}{\sin^2(a)}.
\end{equation}
Hence proceeding as in \eqref{equation(19)} and exploiting that $\tn{sgn}(\cos(\gamma_{a,t}))=\tn{sgn}(a-t)$
\begin{equation}
\label{equation(20)}
\cos(\gamma_{a,t})=\frac{\sin(a-t)\cot(\gamma)}{\underbrace{\sqrt{\sin^2(a)+\cot^2(\gamma)\sin^2(a-t)}}_{=:N(a,t)}}
\end{equation}
Careful differentiation of \eqref{equation(19)} and \eqref{equation(20)} gives
\begin{align}
\frac{\partial\cos(c_a)}{\partial a}&=\frac{-\cot(\gamma)\sin(a)\cos(a)}{K^3}\\
\frac{\partial \cos(\gamma_{a,t})}{\partial a}&=\frac{\cos(a-t)\cot(\gamma)\sin^2(a)-\sin(a)\cos(a)\sin(a-t)\cot(\gamma)}{N^3}\\
&=\frac{\cot(\gamma)\sin(a)\sin(t)}{N^3}.
\end{align}
And hence
\begin{equation}
\label{equation(21)}
\frac{\partial g}{\partial a}(a,t)=\underbrace{\cot(\gamma)\sin(a)}_{\geq 0}\left(\frac{\sin(t)}{N^3}-\frac{t\cos(a)}{\pi K^3}\right)
\end{equation}
\ \par
Case $t\in [0,\pi]$ \& $a \in \left[\max\left\{0,t-\Pih\right\},\Pih\right)$: As $N\leq K$  and $t \cos(a)\geq 0$ we estimate \eqref{equation(21)} to get
\begin{equation}
\label{equation(22)}
\frac{\partial g}{\partial a}(a,t)\geq \frac{\cot(\gamma)\sin(a)}{N^3}\left(\sin(t)-\frac{t\cos(a)}{\pi}\right).
\end{equation}
If $0\leq t \leq \frac{\pi}{2}$ then 
$\sin(t)-\frac{t\cos(a)}{\pi}\geq \sin(t)-\frac{t}{\pi}$ which is $\geq 0$ by concavity of the sine function on $\left[0,\Pih\right]$. And if $\Pih\leq t\leq \pi$ then as the cosine function is decreasing on~$\left[0,\Pih\right]$
\begin{equation}
\sin(t)-\frac{t\cos(a)}{\pi}\geq \sin(t)-\frac{t\cos\left(t-\Pih\right)}{\pi}=\sin(t)\left(1-\frac{t}{\pi}\right)\geq 0.
\end{equation}
Hence by \eqref{equation(22)} $\frac{\partial g}{\partial a}\geq 0$.\par
Case $t\in \left[-\Pih,0\right]$ \& $a \in \left[0,\Pih\right)$: Then as $N\leq K$ and $t\leq 0$
\begin{equation}
\frac{\partial g}{\partial a}(a,t)\leq \frac{\cot(\gamma)\sin(a)}{N^3}\left(\sin(t)-\frac{t\cos(a)}{\pi}\right)
\end{equation}
but $\sin(t)-\frac{t\cos(a)}{\pi}\leq\sin(t)-\frac{t}{\pi}\leq 0$.
\end{proof}
\section{Quadratic Isoperimetric Constants}
\label{section4}
\subsection{Volume functionals and Jacobians}
\label{section4.1}
In this paragraph we give a short introduction to the topic of volume functionals suited for our later purposes. The interested reader is referred for example to \cite{Iva09} and \cite{LW17b} for a more detailed exposition.\par
Let $n\in \N$. The $n$-dimensional \textit{Busemann volume functional} $\mathcal{V}^b$ assigns to a Lipschitz function $f: E \rightarrow X$ where $E \subseteq \R^n$ is a Borel set and $X$ a metric space the Hausdorff measure of the image counting multiplicities. Precisely
\begin{equation}
\mathcal{V}^b(f):=\int_X \tn{card}\left( f\inv (y) \right)\ \tr \mathcal{H}^n_X(y).
\end{equation}
If $X$ is a Riemannian manifold this seems quite adequate for measuring the $n$-dimensional volume of the image. However in more general situations the choice of Hausdorff measure on $X$ is less canonical and there exist other reasonable nonequivalent definitions.\par
Let $\mathcal{V}$ be a functional assigning to every Lipschitz map $f:E\rightarrow X$, where $E \subseteq \R^n$ is Borel and $X$ a metric space, a number $\mathcal{V}(f)\in [0,\infty]$. $\mathcal{V}$ is called~\textit{$n$-dimensional volume functional} if it satisfies the following properties for every such $f$:
\begin{enumerate}[label={\arabic*.}]
\item If $X$ is a Riemannian manifold, then $\mathcal{V}(f)=\mathcal{V}^b(f)$. \hfill (Normalization)
\item If $g:X\rightarrow Y$ is $L$-Lipschitz, then $\mathcal{V}(g\circ f)\leq L^n\cdot\mathcal{V}(f)$. \hfill (Monotonicity)
\item If $E'\subseteq \R^2$ is a Borel set and $\phi:E'\rightarrow E$ is bi-Lipschitz, then $\mathcal{V}(f\circ \phi)=\mathcal{V}(f)$.\par \hfill (Coordinate invariance)
\item If $M=\bigcupdot^\infty_{i=1}E_i$ where $E_i\subseteq \R^2$ are Borel sets, then
$\mathcal{V}(f)=\sum_{i=1}^\infty \mathcal{V}\left(f_{|E_i}\right)$. \par \hfill ($\sigma$-additivity)
\end{enumerate}
It is not hard to check that $\mathcal{V}^b$ is a volume functional in this sense. $2$-dimensional volume functionals will also be called \textit{area functionals} and will be denoted by $\mathcal{A}$ instead of $\mathcal{V}$.\par
Let $f:E \rightarrow X$ be a map where $E \subseteq \R^n$ is Borel and $X$ a metric space. $f$ is called \textit{metrically differentiable} at $p \in E$ if there exists a seminorm $s$ on $\R^n$ such that
\begin{equation}
\label{equation(23)}
\lim_{q \in E; q\rightarrow p} \frac{d(f(q),f(p))-s(q-p)}{|p-q|}=0.
\end{equation}
If this is the case such $s$ is called the \textit{metric differential} of $f$ at $p$ and is denoted by~$\tn{md}_pf$. If one replaces $\lim$ in \eqref{equation(23)} by the approximate limit $\tn{ap}\lim$ then one obtains the notions of \textit{approximate differentiability} and the \textit{approximate metric differential} $\textnormal{apmd}_p f$.\par
Bernd Kirchheim proved in \cite{Kir94} that if $f:E \rightarrow X$ is Lipschitz, then $f$ is metrically differentiable at almost every $p\in E$ and
\begin{equation}
\label{equation(24)}
\mathcal{V}^b(f)=\int_E \Ja^b(\tn{md}_p f)\tr\Leb^n(p).
\end{equation}
Here for a seminorm $s$ on $\R^n$ the $n$-dimensional Busemann Jacobian $J^b$ is defined via $\Ja^b(s):=\frac{\alpha_n}{\Leb^n(B_s)}$ where $B_s\subseteq \R^n$ is the unit ball of $s$ and $\alpha_n$ is the volume of the standard $n$-dimensional Euclidean unit ball. By \cite{Kar07} if $E=\bigcupdot^\infty_{i=0} E_i \cupdot S$ where $E_i$ are measurable such that the restrictions $f_{|E_i}$ are Lipschitz and $\Leb^2(S)=0$, then $f$ is approximately differentiable almost everywhere on $E$ and the analog of \eqref{equation(24)} holds with $\tn{md}_pf$ replaced by $\tn{apmd}_pf$, if furthermore $f$ satisfies Lusin's property~$(N)$.\par
The Busemann Jacobian may be generalized as follows.
Let $\Sigma^n$ be the set of seminorms on $\R^n$ and $\Sigma^n_0$ the set of norms on $\R^n$. An \textit{$n$-dimensional Jacobian} is a map $\Ja:\Sigma^n \rightarrow [0,\infty)$ fulfilling the following properties:
\begin{enumerate}[label={\arabic*.}]
\item
$\Ja(s_e)=\Ja^b(s_e)$ if $s_e$ is an Euclidean norm. \hfill (Normalization)
\item
$\Ja(s) \geq \Ja(s')$ whenever $s\geq s'$. \hfill (Monotonicity)
\item
$\Ja(s \circ T)=|\det{T}|\ \Ja(s)$ for $T \in M_n(\R)$. \hfill (Transformation law)
\end{enumerate}
Again one can check that $J^b$ defines a Jacobian in this sense.\par
If $\Ja$ is an $n$-dimensional Jacobian then the corresponding $n$-volume functional~$\mathcal{V}^{\Ja}$ is defined via
\begin{equation}
\label{equation(25)}
\mathcal{V}^{\Ja}(f):=\int_E \Ja(\tn{md}_p f)\ \tr \Leb^n(p).
\end{equation}
Vice versa if $\mathcal{V}$ is an $n$-dimensional volume functional then one may define a Jacobian $\Ja^\mathcal{V}$ by $\Ja^{\mathcal{V}}(s):=0$ if $s \in \Sigma^n \setminus \Sigma^n_0$ and 
\[\Ja^\mathcal{V}(s):=\mathcal{V}\left(\tn{id}:(0,1)^n\rightarrow \left((0,1)^n,s\right)\right)\]
otherwise. Applying \cite[Lemma 4]{Kir94} one can show that the operations ${\Ja}^\bullet$ and $\mathcal{V}^\bullet$ are mutually inverse. Hence $n$-Jacobians and $n$-volume functionals are incarnations of the same class of objects.\par 
It is easy to create other examples of Jacobians than $\Ja^b$ and hence other volume functionals than $\mathcal{V}^b$. Here are the most important ones.
\begin{Ex} 
Let $s \in \Sigma$. If $s\in \Sigma^n \setminus \Sigma^n_0$ set $\Ja^\bullet(s):=0$. Otherwise
\begin{enumerate}[label={\arabic*.}]
\item the Holmes-Thompson Jacobian $\Ja^{ht}$ is defined via $\Ja^{ht}(s):=\frac{\Leb^n(B^*_s)}{\alpha_n}$. Here $B^*_s:=\{v \in \R^n|\langle v,w\rangle \leq 1 ;\forall w \in B_s\}$ is the dual unit ball of $s$.
\item the inscribed Riemannian Jacobian $\Ja^{ir}$ is defined via $\Ja^{ir}(s):=\frac{\alpha_n}{\Leb^n(L_s)}$. Here $L_s$ is the John ellipsoid of $B_s$. That is the ellipsoid contained in $B_s$ of maximal $\mathcal{L}^n$ measure.
\end{enumerate}
\end{Ex}
One can obtain the following comparison results for the presented Jacobians. 
\begin{Thm}
\label{theorem4.2}
\begin{enumerate}[label={\arabic*.}]
\item $\Ja^{ht}\leq \Ja^{b}$ and for $n=2$ one has $\inf_{s\in \Sigma^2_0} \frac{\Ja^{ht}(s)}{\Ja^{b}(s)}=\frac{8}{\pi^2}$.
\item If $\Ja$ is an $n$-dimensional Jacobian, then $\Ja\leq \Ja^{ir}$ and \[q_n^{\Ja}:=\inf_{s \in \Sigma^n_0} \frac{\Ja(s)}{\Ja^{ir}(s)}\geq\left(\frac{1}{\sqrt{n}}\right)^n.\]
\item $q_2^b=\frac{\pi}{4}
$, $q_2^{ht}=\frac{2}{\pi}$ and $q_2^{ir}=1$.
\end{enumerate}
\end{Thm}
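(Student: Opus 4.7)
My plan is to unwind each Jacobian ratio into a classical affine invariant of the unit ball $B_s$ and then invoke sharp inequalities from convex geometry.

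For Part~$1$, the ratio $\Ja^{ht}(s)/\Ja^b(s)$ equals $\Leb^n(B_s)\Leb^n(B_s^*)/\alpha_n^2$, the normalized Mahler volume of $B_s$. Hence $\Ja^{ht}\leq \Ja^b$ is precisely the Blaschke--Santal\'o inequality $\Leb^n(B_s)\Leb^n(B_s^*)\leq \alpha_n^2$, and the planar infimum $8/\pi^2$ is Mahler's inequality $\Leb^2(B_s)\Leb^2(B_s^*)\geq 8$ for centrally symmetric planar bodies, with equality on parallelograms.

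For Part~$2$, I would use the transformation law to normalize $L_s$ to the Euclidean unit ball $B_{s_e}$; then $\Ja^{ir}(s) = \alpha_n/\Leb^n(B_{s_e}) = 1$. Since by definition the John ellipsoid lies inside $B_s$, $B_{s_e}\subseteq B_s$, i.e.\ $s\leq s_e$ pointwise, and monotonicity gives $\Ja(s)\leq \Ja(s_e)=1$. For the lower bound I invoke the centrally symmetric case of John's theorem, $B_s\subseteq \sqrt{n}\,B_{s_e}$, equivalently $\sqrt{n}\,s\geq s_e$. The transformation law applied to $T = \frac{1}{\sqrt{n}}\tn{Id}$ yields $\Ja(s_e/\sqrt{n}) = n^{-n/2}$, and monotonicity then gives $\Ja(s)\geq n^{-n/2}$.

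For Part~$3$, I evaluate on the $\ell^\infty$ norm on $\R^2$: here $B_s$ is the unit square of area $4$, $L_s$ the inscribed disc of area $\pi$, and $B_s^*$ the $\ell^1$-ball of area $2$, giving $\Ja^b/\Ja^{ir} = \pi/4$ and $\Ja^{ht}/\Ja^{ir} = 2/\pi$ on this example. The matching lower bounds are obtained as follows: $q_2^{ir}=1$ is trivial; $q_2^b=\pi/4$ is the classical planar theorem $\Leb^2(L_s)\geq \frac{\pi}{4}\Leb^2(B_s)$ for centrally symmetric bodies (due to Behrend), again tight on parallelograms; and concatenating Parts~$1$ and~$2$ yields $\Ja^{ht}/\Ja^{ir} = (\Ja^{ht}/\Ja^b)(\Ja^b/\Ja^{ir})\geq (8/\pi^2)(\pi/4) = 2/\pi$, with simultaneous saturation on parallelograms.

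The main obstacle is the appeal to the sharp affine-geometric inequalities of Mahler and Behrend: both have elementary statements but nontrivial classical proofs that I would cite rather than reproduce. The remainder is careful bookkeeping via the transformation law and monotonicity axioms, exploiting the happy planar coincidence that parallelograms simultaneously extremize both the Mahler volume and the John-ellipse ratio.
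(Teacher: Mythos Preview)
Your proposal is correct and follows the same route the paper takes: the paper's proof of Theorem~\ref{theorem4.2} consists entirely of citations---Blaschke--Santal\'o (via \cite{Sch13}) and Mahler \cite{Mah39} for Part~1, and \cite[Section~2.4]{LW17b} for Parts~2 and~3---and what you have written is precisely the content of those references unpacked into self-contained arguments via John's theorem and Behrend's inequality. Your concatenation trick for $q_2^{ht}=\tfrac{2}{\pi}$, exploiting that parallelograms simultaneously extremize both the Mahler volume and the John-ellipse ratio, is a nice touch that makes the computation transparent.
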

\begin{proof}
$\Ja^{ht}\leq \Ja^b$ is a formulation of the so called Blaschke-Santaló inequality, see \cite[$(10.28)$]{Sch13} and the references therein. The second part of $1.$ was proved in \cite{Mah39}. The optimal lower bound of $\frac{J^{ht}}{J^b}$ for 
general $n$ is subject to a famous conjecture of Kurt Mahler and remains open for $n \geq 4$. For $2.$ and $3.$ see \cite[Section $2.4$]{LW17b}.
\end{proof}
\subsection{Quadratic Isoperimetric Constants}
\label{section4.2}
Let $\mathcal{A}$ be an area functional and $X$ a metric space. If $\eta:\K \rightarrow X$ is a Lipschitz curve, then we call a Lipschitz map $f:\D\rightarrow X$ a \textit{filling} of $\eta$ if $f$ restricts to $\eta$ on $\K$. Here and in the following $D^2$ denotes the standard closed unit disc in $\R^2$. Define the \textit{filling area} of $\eta$ with respect to $\A$ by $\tn{Fill}_\mathcal{A}(\eta):=\inf_f \mathcal{A}(f)$ where $f$ ranges over all fillings of $\eta$. We say that $X$ satisfies a $C$-\tb{quadratic isoperimetric inequality} with respect to $\mathcal{A}$ if for all $\eta:\K \rightarrow X$ Lipschitz one has \[\tn{Fill}_\mathcal{A}(\eta)\leq C\cdot L_\eta^2,\] where $L_\eta$ denotes the length of $\eta$. 
Having set the definitions it is a straight forward consequence of Theorem~\ref{theorem1.1} that spaces admitting a contracting barycenter map satisfy a $\frac{1}{2\pi}$-quadratic isoperimetric inequality.
\begin{proof}[Proof of Theorem \ref{theorem1.2}]
Let $\A$ be an area functional, $\eta:\K \rightarrow X$ a Lipschitz curve of length $L_\eta=2\pi v$, $\overline{\eta}:\K\rightarrow X$ its constant speed parametrization and $\phi:\D\rightarrow H^2$ a diffeomorphism. Then $\overline{\eta}$ is of constant speed $v$ and hence $v$-Lipschitz. So by Theorem~\ref{theorem1.1} there exists an $v$-Lipschitz extension $f:H^2\rightarrow X$ of $\overline{\eta}$. Hence
\begin{equation}
\tn{Fill}_\mathcal{A}(\overline{\eta})\leq \mathcal{A}(f \circ \phi)\leq v^2 \mathcal{A}(\phi)=v^2 \mathcal{A}^b(\phi)=v^2 \mathcal{H}^2(H^2)=v^2 2\pi=\frac{1}{2\pi}L_\eta^2.
\end{equation}
But by \cite[Lemma~$3.5$]{LWYar} one has $\tn{Fill}_\A(\eta)=\tn{Fill}_\mathcal{A}(\overline{\eta})$.
\end{proof}
By a similar calculation invoking Reshetnyak's majorization theorem instead of Theorem~\ref{theorem1.1} the constant $\frac{1}{2\pi}$ in Theorem~\ref{theorem1.2} may be improved to $\frac{1}{4\pi}$ if $X$ is a Hadamard space. Conversely Alexander Lytchak and Stefan Wenger proved in \cite{LW18} that a proper metric space satisfying a $\frac{1}{4\pi}$-quadratic isoperimetric inequality with respect to $\mathcal{A}^b$ is a Hadamard space. This does not hold for $\mathcal{A}^{ht}$ as by \cite[Theorem 4.4.2]{Tho96} all two dimensional normed spaces satisfy a $\frac{1}{4\pi}$-quadratic inequality with respect to $\mathcal{A}^{ht}$. The following theorem of Sergei Ivanov implies that the constant $\frac{1}{2\pi}$ in Theorem~\ref{theorem1.2} can in general not be improved even for Banach spaces and $\mathcal{A}=\mathcal{A}^{ht}$.
\begin{Thm}[\cite{Iva09}, Theorem~$5.2$ \& \cite{Iva11}, Theorem~$2$]
\label{theorem4.3}
Let $X$ be a metric space and $\iota:\K\rightarrow X$ an isometric embedding. Then $\tn{Fill}_{\mathcal{A}^{ht}}(\iota)\geq 2\pi$
\end{Thm}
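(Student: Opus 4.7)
The plan is to reduce the statement to a Crofton-type isoperimetric inequality for Finsler $2$-discs with convex boundary, and then invoke integral-geometric tools on the unit cotangent bundle. Given any Lipschitz filling $f:\D\rightarrow X$ of $\iota$, Kirchheim's theorem from section~\ref{section4.1} provides the measurable seminorm field $s_p:=\tn{md}_p f$, and formula~\eqref{equation(25)} expresses
\begin{equation*}
\mathcal{A}^{ht}(f)=\Int{\D}\J{ht}(s_p)\,\tr\Leb^2(p),
\end{equation*}
which is precisely the Holmes-Thompson area of the semi-Finsler disc $(\D,s)$. Writing $d_s$ for the length pseudo-metric induced by $s$, the map $f:(\D,d_s)\rightarrow X$ is $1$-Lipschitz by construction, and hence on the boundary one has $d_s(p,q)\geq d_X(\iota(p),\iota(q))=\Dk(p,q)$ for all $p,q\in\partial\D$, while $\partial\D$ itself has $d_s$-length $2\pi$.

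It therefore suffices to prove the following Finsler isoperimetric inequality: every semi-Finsler disc $(\D,s)$ whose boundary has length $2\pi$ and satisfies $d_s(p,q)\geq\Dk(p,q)$ for all boundary points has Holmes-Thompson area at least $2\pi$. The natural tool is integral geometry on the unit cotangent bundle $U^*\D:=\{(x,\xi)\in T^*\D : s^*_x(\xi)\leq 1\}$. For a smooth, strictly convex, reversible Finsler structure the Holmes-Thompson area coincides, up to a factor of $\pi$, with the symplectic volume of $U^*\D$, and Santaló's formula gives
\begin{equation*}
\tn{vol}_{\tn{symp}}(U^*\D)=\Int{U^*_+\partial\D}\tau(x,\xi)\,\tr x\,\tr\xi,
\end{equation*}
where $\tau(x,\xi)$ is the exit time of the cogeodesic flow and hence equals the $s$-length of the associated geodesic chord. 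Since every chord length dominates the internal Finsler distance between its endpoints, and thus the $\Dk$-distance by the convex-boundary hypothesis, a change of variables parametrizing chords by their boundary endpoints yields a bound
\begin{equation*}
\mathcal{A}^{ht}(\D,s)\geq\frac{1}{c}\Int{\partial\D\times\partial\D}\Dk(p,q)\,\mu(\tr p,\tr q)
\end{equation*}
for an explicit natural measure $\mu$ and constant $c$, and a direct computation on a reference round circle of circumference $2\pi$ evaluates the right hand side to exactly $2\pi$.

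The main obstacle is making this argument rigorous for the rough seminorm field produced by metric differentials: $s_p$ need not be smooth, strictly convex, or even a norm, and is only defined almost everywhere. One cannot simply dominate $s$ by a smooth strictly convex $\tilde s\geq s$ and apply the smooth case, since $\J{ht}$ moves in the wrong direction under such an approximation (the dual unit ball $B^*$ shrinks as $s$ grows, so $\J{ht}(\tilde s)\geq\J{ht}(s)$). A more delicate approximation is needed, for instance smoothing from below at the level of the dual unit balls $B^*_{s_p}$ in a measurable and consistent way while preserving the convex-boundary condition, or alternatively approximating the length metric $d_s$ by smooth Finsler metrics whose boundary still dominates $\Dk$ and appealing to continuity of $\mathcal{A}^{ht}$ in this approximation. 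Handling non-reversibility of $s_p$ and the null set on which $s_p$ fails to be a norm forms the remaining technical core of the argument.
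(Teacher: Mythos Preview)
The paper does not prove Theorem~\ref{theorem4.3} at all: it is quoted as a black box from Ivanov's papers \cite{Iva09} and \cite{Iva11}, with no argument given. So there is no ``paper's own proof'' to compare against; your proposal is an attempt to reconstruct Ivanov's result rather than to match anything in the present paper.

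As a reconstruction, your outline points in a reasonable direction but is not a proof, and you say so yourself. The reduction from a Lipschitz filling $f$ to a semi-Finsler disc via metric differentials is correct, and the boundary condition $d_s\geq\Dk$ on $\partial\D$ is derived properly. The Santal\'o/Crofton step, however, is sketched too loosely to be convincing even in the smooth strictly convex case: you assume implicitly that every cogeodesic launched from the boundary exits again as a simple chord (this needs a convexity or ``simple disc'' hypothesis you have not secured), and the ``change of variables to boundary endpoint pairs'' hides a Jacobian that you never identify, so the claim that the reference computation yields exactly $2\pi$ is asserted rather than verified. The regularity gap you flag at the end is genuine and serious; your suggested fix of smoothing dual unit balls from below while preserving the boundary inequality is not obviously possible, and you do not carry it out.

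For comparison, Ivanov's actual argument in \cite{Iva11} does not run through Santal\'o's formula. He works directly with Lipschitz surfaces in metric (or normed) ambient spaces and proves the inequality via a calibration-type construction: roughly, he produces closed $2$-forms (or their metric-space analogues, built from pairs of $1$-Lipschitz functions) whose integral over any filling is controlled below by a boundary term depending only on $\iota$, and whose pointwise mass is bounded above by $\Ja^{ht}$. This bypasses both the smooth-Finsler regularity issues and the need to analyze geodesic flow on a rough disc. If you want to complete your approach you would essentially have to reprove a filling-minimality theorem for non-smooth Finsler discs, which is the hard content of \cite{Iva11}; it is cleaner to cite Ivanov, as the paper does.
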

By Theorem~\ref{theorem4.3} and Theorem~\ref{theorem4.2} if a space $X$ admits an isometric embedding of $\K$ then Theorem~\ref{theorem1.2} cannot be improved for $X$ and $\mathcal{A}^{ht}/\mathcal{A}^{b}/\mathcal{A}^{ir}$. Examples of such embeddings are Kuratowski embedding $\K \rightarrow L^\infty(\K)$ and $\delta: \K \rightarrow \mathcal{P}_1(\K)$.\par 
\subsection{Local Quadratic Isoperimetric Inequality Constants}
\label{section4.3}
Our setting is the following. Let $M$ be a smooth manifold. A \textit{continuous Finsler/Riemannian structure} on $M$ is a continuous function $F:TM\rightarrow \R$ such that $F$ is continuous and for every $p\in M$ the restriction $F_p:=F_{|T_pM}:T_pM \rightarrow \R$ is a norm/an Euclidean norm. We call the tuple $(M,F)$ a \textit{continuous Finsler/Riemannian} manifold. For a piecewise differentiable curve $\eta:[a,b]\rightarrow M$ set
\begin{equation}
L^F_\eta:=\int^b_a F_{\eta(t)}\left(\eta'(t)\right)\tr t.
\end{equation}
$L^F$ induces a metric on $M$ which we denote by $d_F$. For $x,y\in M$ it is given by
\begin{equation}
d_F(x,y):=\inf \left\{L^F_\eta| \eta \tn{ piecewise smooth, }\eta(a)=x,\eta(b)=y\right\}.
\end{equation}
Locally $(M,d_F)$ is close to a normed space in the following sense.
\begin{Lem}
\label{lemma4.4}
Let $(M,F)$ be a continuous Finsler manifold, $p \in M$ and $\epsilon >0$. Then there exists a neighborhood $U\subseteq M$ of $p$, a convex set $V \subseteq \R^n$, a norm $s\in \Sigma_0^n$ on $\R^n$ and a diffeomorphism $\phi:V \rightarrow U$ such that $\phi:(V,s)\rightarrow (U,d_F)$ is $(1+\epsilon)$-bi-Lipschitz. If $(M,F)$ is even a continuous Riemannian manifold, then $s$ is a Euclidean norm.
\end{Lem}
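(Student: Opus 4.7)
My plan is to pull $F$ back to a smooth chart, compare it with its constant value $s:=F_p$ at $p$, and size the neighbourhoods so that any curve forced to leave the region where the approximation is valid is automatically too long to compete with the straight-line path.

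Fix a smooth chart $\psi:W\to \tilde W\subseteq \R^n$ centred at $p$ with $\psi(p)=0$, use $d\psi_p$ to identify $T_pM$ with $\R^n$, and write $F:\tilde W\times\R^n\to \R$ again for the Finsler structure in coordinates. Set $s:=F(0,\cdot)$. Using the joint continuity of $F$, the positive homogeneity of each $F(x,\cdot)$, and the compactness of the $s$-unit sphere, one shows that for any $\epsilon'>0$ there is an $R>0$ with $\overline{B_s(0,R)}\subset \tilde W$ such that
\[
(1-\epsilon')\, s(v)\le F(x,v)\le (1+\epsilon')\, s(v)\qquad\text{for all }x\in \overline{B_s(0,R)},\ v\in \R^n.
\]
This uniform pointwise comparison is the workhorse of the argument.

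Set $r:=R/3$, $V:=B_s(0,r)$ (an open convex neighbourhood of $0$), $U:=\psi\inv(V)$, and $\phi:=\psi\inv|_V:V\to U$. For $x,y\in V$ the straight segment $\gamma(t):=(1-t)x+ty$ lies in $V$, so $\phi\circ \gamma$ provides the upper estimate $d_F(\phi(x),\phi(y))\le L^F_{\phi\circ\gamma}\le (1+\epsilon')\, s(y-x)$. For the opposite direction let $\eta$ be a piecewise smooth curve in $M$ from $\phi(x)$ to $\phi(y)$. If $\eta$ is contained in $\psi\inv(B_s(0,R))$, pulling back to the chart and using the triangle inequality for $s$ gives $L^F_\eta\ge (1-\epsilon')\, s(y-x)$. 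Otherwise, let $t^*$ be the first time $\eta$ reaches $\psi\inv(\partial B_s(0,R))$; the sub-arc of $\eta$ up to $t^*$ lies in $\psi\inv(\overline{B_s(0,R)})$ and already has $F$-length at least $(1-\epsilon')(R-s(x))\ge (1-\epsilon')\cdot 2r>(1-\epsilon')\, s(y-x)$, since $s(y-x)<2r$. Taking the infimum over $\eta$ yields $d_F(\phi(x),\phi(y))\ge (1-\epsilon')\, s(y-x)$.

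Combining the two estimates and choosing $\epsilon'$ small enough that $(1+\epsilon')/(1-\epsilon')\le 1+\epsilon$ produces the $(1+\epsilon)$-bi-Lipschitz assertion. In the Riemannian case the norm $s=F_p$ is by hypothesis Euclidean, so the same $\phi$ works. The only genuine difficulty is the ``escape'' case in the lower bound: the pointwise comparison of $F$ with $s$ is available only on $\overline{B_s(0,R)}$, and the ratio $r=R/3$ is there to ensure that a curve reaching $\partial B_s(0,R)$ cannot undercut the straight-line length even after one pays the factor $(1-\epsilon')$. Everything else is a mechanical compactness/continuity argument combined with the positive homogeneity of the norms $F(x,\cdot)$.
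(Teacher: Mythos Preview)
Your argument is correct and is exactly the standard route: pull back to a chart, freeze the norm at $p$, use continuity and homogeneity to get a uniform multiplicative comparison $(1-\epsilon')s\le F\le (1+\epsilon')s$ on a closed $s$-ball, and then choose the domain small enough (your $r=R/3$) so that escaping curves are automatically too long. The escape case is handled cleanly; the only point worth being slightly explicit about is that $\overline{B_s(0,R)}\subset \tilde W$ guarantees the first exit time $t^*$ exists and $\eta|_{[0,t^*]}$ stays inside the chart, which you do use implicitly.

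As for comparison with the paper: there is essentially nothing to compare. The paper does not prove Lemma~\ref{lemma4.4}; it merely remarks that the proof is ``a bit tedious but essentially straight forward'' and points to \cite[Section~3.2]{LY06}. Your write-up supplies precisely the details the paper omits, and is the natural argument one would expect behind that reference.
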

The proof of Lemma~\ref{lemma4.4} is a bit tedious but essentially straight forward. Compare for example \cite[Section 3.2]{LY06}.\par
Let $\mathcal{A}$ be an area functional and $X$ a metric space. We say that $X$ satisfies a $C$-\tb{local quadratic isoperimetric inequality} with respect to $\mathcal{A}$ if for every $p \in X$ there exists a neighborhood $U$ of $p$ that satisfies a $C$-quadratic isoperimetric inequality with respect to $\A$. So by Theorem~\ref{theorem1.1} and Lemma~\ref{lemma4.4} we get Corollary~\ref{corollary1.3}.
\begin{proof}[Proof of Corollary~\ref{corollary1.3}]
We prove $1.$ The proof of $2.$ is similar. Let $p\in M$, $\epsilon >0$ arbitrary and choose $U,C,s,\phi$ as in Lemma~\ref{lemma4.4}. Let $\eta: \K \rightarrow U$ be a Lipschitz curve. Then by Theorem~\ref{theorem1.2}
\begin{equation}
\tn{Fill}_\A(\eta)\leq (1+\epsilon)^2\tn{Fill}_\A(\phi\inv\circ \eta)\leq \frac{(1+\epsilon)^2}{2\pi} L_{\phi\inv\circ\eta}^2\leq  \frac{(1+\epsilon)^4}{2\pi}L_\eta^2.
\end{equation}
As $\epsilon>0$ was arbitrary this completes the proof.
\end{proof} 
By generalizing all the invoked definitions and theorems in an obvious way one may see that also an adequate variant of Corollary~\ref{corollary1.3}.$1$ holds for nonreversible Finsler structures.
\section{Regularity of area minimizers}
\label{section5}
In this section we discuss how our results can be applied to the work of Alexander Lytchak and Stefan Wenger in \cite{LW17a} and \cite{LW17b} to obtain regularity results for minimal discs.\par 
Let $X$ be a complete metric space $X$ and $p>1$. Essentially following \cite{Res97} the Sobolev space $ \mathcal{W}^{1,p}(\D,X)$ may be defined as the set of maps $f:\D\rightarrow X$ satisfying: \begin{enumerate}[label={\arabic*.}]
\item There exists $w \in L^p(\D)$ such that for every $g:X\rightarrow \R$ $1$-Lipschitz the composition $g\circ f$ belongs to the classical Sobolev space $\mathcal{W}^{1,p}(\D,\R)$ and $|\nabla(g \circ f)|\leq w$ almost everywhere.
\item $f$ is essentially separably valued. That is there exists $N\subset \D$, $\Leb^2(N)=0$ such that $f(D^2\fgebackslash N)$ is separable.
\end{enumerate}
Let $u \in \mathcal{W}^{1,p}(\D,X)$. Then by \cite[2.7.]{Kar07} one may decompose $\D=\bigcupdot_{i=1}^\infty E_i\cupdot S$ such that $u$ is Lipschitz restricted to $E_i$ and $\Leb^2(S)=0$. So  $u$ is approximately metrically differentiable almost everywhere and if $\Ja$ is a Jacobian one may define $\A^\Ja(u)$ as in \eqref{equation(25)} with $\tn{md}_p u$ replaced by $\tn{apmd}_p u$.
Furthermore as in the situation of the classical Sobolev spaces one may naturally define a trace map $\tn{tr}(u):\K\rightarrow X$ representing the boundary of $u$, see \cite[Section 1.12]{KS93}.\par
So there are all the ingredients to define filling areas and quadratic isoperimetric inequality in a Sobolev sense. For $\eta:\K \rightarrow X$  Lipschitz we call $u \in W^{1,2}(\D,X)$ a \textit{Sobolev filling} of $\eta$ if $\tn{tr}(u)$ parametrizes $\eta$. Furthermore if $\mathcal{A}$ is an area functional we define the \textit{Sobolev filling area} of $\eta$ by $\tn{Fill}^S_\mathcal{A}(\eta):=\inf \A(u)$ where $u$ ranges over all Sobolev fillings of $\eta$. $X$ is said to satisfy a \textit{$C$-Sobolev quadratic isoperimetric inequality} w.r.t. $\mathcal{A}$ if for all $\eta:\K \rightarrow X$ Lipschitz $\tn{Fill}^S_\A(\eta)\leq C\cdot L_\eta^2$.\par
Lipschitz maps $\D\rightarrow X$ are especially contained in $\mathcal{W}^{1,2}(\D,X)$ and for such the trace is nothing but restriction to the boundary. Hence $\tn{Fill}^S_\mathcal{A}\leq \tn{Fill}_\A$. By \cite[Proposition 3.1]{LWYar} equality holds if $X$ is Lipschitz $1$-connected and hence by Theorem~\ref{theorem1.1} especially for spaces admitting a contracting barycenter map. So for such spaces the notions of $C$-quadratic isoperimetric inequality and $C$-Sobolev quadratic isoperimetric inequality are equivalent.\par
In \cite{LW17a} Lytchak and Wenger proved that if $X$ is proper, $\A$ is \textit{quasiconvex} and $\eta$ is a Jordan curve such that $\tn{Fill}_\A^S(\eta)<\infty$, then there exists a Sobolev filling $u$ of $\eta$ such that $\A(u)=\tn{Fill}^S_\A(\eta)$. Furthermore $u$ may be chosen minimizing the Reshetnjak energy among all such minimal $\A$-fillings. We call such $u$ minimizing area and energy \tb{$\A$-minimal disc} following the terminology of \cite{LW17b}.
Quasiconvexity of $\mathcal{A}$ means that flat $2$-dimensional discs in finite dimensional normed spaces are minimal fillings of their boundary curves with respect to $\mathcal{A}$. This holds for all discussed examples of area functionals, see \cite[Theorem 3]{BI02}, \cite[Theorem 1]{BI12}, \cite[Theorem 6.2]{Iva09} respectively. In \cite{GWar} the existence of $\A$-minimal discs for quasiconvex area functionals and prescribed Jordan curve was generalized to a larger class of metric spaces including apart proper spaces also Hadamard spaces, dual Banach spaces and injective spaces by Chang-Yu Guo and Stefan Wenger.\par
Besides other regularity properties of $\A$-minimal discs Lytchak and Wenger proved a variant of the following.
\begin{Thm}
\label{theorem5.1}
Let $X$ be a complete metric space satisfying a $C$-Sobolev local quadratic isoperimetric inequality, $\A$ an area functional and $u$ an $\A$-minimal disc. \\ If $u(\D)$ is precompact in $X$, then $u$ admits a representative $\overline{u}$ that is locally $\alpha$-Hölder continuous on the interior of $\D$ where
$\alpha=\frac{q_2^\mathcal{A}}{4\pi C}$. If $X$ satisfies property $(ET)$ this improves to $\alpha=\frac{1}{4\pi C}$.
\end{Thm}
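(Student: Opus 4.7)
The plan is to derive Theorem~\ref{theorem5.1} from the Hölder regularity results of Lytchak and Wenger in \cite{LW17a} and \cite{LW17b}, together with a compactness reduction handling the passage from a local to an effectively global quadratic isoperimetric inequality on the image.

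First I would replace the ambient space $X$ by a suitable neighborhood of $\overline{u(\D)}$. Since $u(\D)$ is precompact, $\overline{u(\D)}$ is compact, and the local Sobolev quadratic isoperimetric inequality together with a Lebesgue-number argument produces a radius $r_0 > 0$ such that every closed ball in $X$ of radius $r_0$ meeting $\overline{u(\D)}$ lies in a neighborhood satisfying a $C$-Sobolev quadratic isoperimetric inequality. This upgrades the hypothesis to a uniform scale-restricted $C$-Sobolev quadratic isoperimetric inequality at scale $r_0$ on the relevant subset of $X$, which is precisely the hypothesis needed for the Lytchak-Wenger regularity theorems.

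Next I would invoke the Hölder regularity theorem for $\A$-minimal discs: under such a scale-restricted $C$-isoperimetric inequality, an $\A$-minimal disc $u$ (area and Reshetnyak-energy minimizing) admits a representative that is locally Hölder continuous in the interior of $\D$. The Hölder exponent tracks two factors. The factor $\tfrac{1}{4\pi C}$ arises via the classical Morrey/Courant-Lebesgue argument: combine the quadratic isoperimetric inequality with the co-area inequality to estimate the energy of $u$ on small balls by the squared length of $u$ restricted to a large portion of concentric circles, and iterate to obtain $r^{2\alpha}$-growth of energy on balls of radius $r$; this gives Hölder continuity via the standard Morrey embedding. The factor $q_2^\A$ enters because in general the Reshetnyak energy of an $\A$-minimal disc is controlled by the $\A$-area through the Jacobian comparison $q_2^\A \cdot J^{ir} \leq J^\A$ from Theorem~\ref{theorem4.2}, so area minimality combined with energy minimality only yields an energy bound weakened by $q_2^\A$. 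Under property $(ET)$ the approximate metric derivatives of $u$ are almost everywhere induced by inner products, so the inscribed-Riemannian Jacobian coincides with $\A$-Jacobian on $u$-relevant seminorms, and the factor $q_2^\A$ drops out, giving $\alpha = \tfrac{1}{4\pi C}$.

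The bulk of the work is the Morrey-type estimate underlying the Lytchak-Wenger theorem, which is already available in \cite{LW17a,LW17b}; the main point I expect to require care is the local-to-global reduction via precompactness and the correct tracking of $q_2^\A$ versus the improved constant under $(ET)$. With the precompactness established uniformly on $\overline{u(\D)}$ and the Jacobian comparison of Theorem~\ref{theorem4.2} in hand, the cited regularity theorems apply verbatim and yield the claimed exponents.
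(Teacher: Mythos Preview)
Your proposal is correct and follows essentially the same approach as the paper: reduce the local isoperimetric hypothesis to a uniform small-scale one on $\overline{u(\D)}$ via precompactness and a Lebesgue-number argument, then invoke the Lytchak--Wenger regularity theorem, tracking the factor $q_2^{\A}$ through the comparison $\A^{ir}\leq \tfrac{1}{q_2^\A}\A$ and observing it disappears under property $(ET)$. The paper's proof is terser---it pinpoints \cite[Lemma~8.6]{LW17a} as the only place the isoperimetric hypothesis enters---but the strategy and the handling of both the compactness reduction and the $q_2^\A$ constant match yours.
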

Following \cite[Section 11]{LW17a} a space is said to satisfy property $(ET)$ if for all $u \in \mathcal{W}^{1,2}(\D,X)$ almost all approximate metric differentials $\tn{apmd}_p u$ are possibly degenerate Euclidean seminorms. Examples of spaces satisfying property $(ET)$ include continuous Riemannian manifolds and spaces admitting a lower or upper curvature bound in the sense of Alexandrov. The precompactness assumption in Theorem~\ref{theorem5.1} e.g. applies automatically in case $X$ is compact, $u$ is continuous or $X$ is proper and $u$ bounded.
\begin{proof}[Proof of Theorem~\ref{theorem5.1}]
This is a variant of \cite[Theorem 4.5]{LW17b}. However unfortunately their formulation does not completely fit to our situation. Instead of a local quadratic isoperimetric inequality and precompact image, they demand a quadratic isoperimetric inequality at uniform small scales. However the quadratic isoperimetric inequality property of $X$ only comes into play when proving \cite[Lemma 8.6]{LW17a}. By Lebesgue covering theorem curves of uniform small scale in $u(\D)\subseteq X$ satisfy a $C$-quadratic isoperimetric inequality. This suffices to perform the vary same proof of \cite[Lemma 8.6]{LW17a}.\par 
The constant $q_2^{\A}$ only comes into play when estimating $\A^{ir}(u_{|\Omega})$ by $\frac{1}{q_2^\A}\A(u_{|\Omega})$ where $\Omega \subseteq \D$. So due to the normalization property $q_2^\A$ may be replaced by $1$ if $X$ satisfies property $(ET)$.
\end{proof}
We may apply Theorem~\ref{theorem1.1} and Corollary~\ref{corollary1.3} to calculate concretely Hölder regularity constants of $\mathcal{A}$-minimal discs via Theorem~\ref{theorem5.1}.\par 
Let $X$ be a complete metric space, $\mathcal{A}$ an area functional and $u \in \mathcal{W}^{1,2}(\D,X)$ $\mathcal{A}$-minimal with precompact image.  Then $u$ admits a representative that is locally $\alpha$-Hölder continuous in the interior of $\D$...
\begin{enumerate}[label={\arabic*.}]
\item ... for $\alpha=\frac{q_2^{\mathcal{A}}}{2}$ if $X$ admits a conical geodesic bicombing. Especially for $\mathcal{A}=\mathcal{A}^{ht}/\mathcal{A}^{b}/\mathcal{A}^{ir}$ one may take $\alpha=\frac{1}{\pi}\big/\frac{\pi}{8}\big/ \frac{1}{2}$ respectively. If $X$ furthermore satisfies $(ET)$, then $\alpha$ improves to $\Ha$ for every $\mathcal{A}$.
\item ... for $\alpha=1$ if $X$ is a Hadamard space. So in this situation $u$ is locally Lipschitz in the interior. For the Reshetnyak energy replaced by the Korevaar-Schoen-energy this follows also from \cite{KS93}.
\item ... for all $\alpha<\frac{q_2^{A}}{2}$ if $X$ is a continuous Finsler manifold. Especially for $\mathcal{A}=\mathcal{A}^{ht}/\mathcal{A}^{b}/\mathcal{A}^{ir}$ for all $\alpha<\frac{1}{\pi}\big/\frac{\pi}{8}\big/ \frac{1}{2}$ respectively. As a special case this proves Theorem~\ref{theorem1.4}.
\item ... for all $\alpha<1$ if $X$ is a continuous Riemannian manifold or a space of nonpositive curvature in the sense of Alexandrov.
\end{enumerate}
For $m\in \N$ a not necessarily symmetric norm $s$ is said to satisfy $(GAm)$ if its $m$-harmonic symmetrization \[
s_{(m)}(v):=\left(\frac{2}{s(v)^{-m}+s(-v)^{-m}}\right)^{1/m}
\]
defines a norm. By \cite[Corollary 3.4]{PvdM17} one may see that also $\mathcal{A}^b$-minimal discs in nonreversible Finsler manifolds such that the norms on all tangent spaces satisfy $(GA2)$ are locally $\alpha$-Hölder continuous for all $\alpha <\frac{\pi}{8}$. 
\section{Acknowledgements}
This article extends my master thesis "Quadratic Isoperimetric Inequality Constants" handed in at the Universität zu Köln in January 2018. I want to thank my supervisor Alexander Lytchak for stating the topic and really great support during the time of writing the thesis and this article. Furthermore I would like to thank Christian Lange for checking the main proofs calculation and Giuliano Basso for a useful discussion.\par 
Also I would like to thank the anonymous referee for pointing out several typos and giving minor corrections.
\bibliographystyle{alpha}
\bibliography{ms}
\end{document}